\def\frk{\frak}               % font for "Fraktur"
\def\Phi{{\frk n}}
\def\Phi{{\frk N}}
\def\opn#1#2{\def#1{\operatorname{#2}}} % to make operators
\opn\chara{char} \opn\length{\ell} \opn\pd{pd} \opn\rk{rk}
\opn\projdim{proj\,dim} \opn\injdim{inj\,dim} \opn\rank{rank}
\opn\depth{depth} \opn\sdepth{sdepth} \opn\fdepth{fdepth}
\opn\grade{grade} \opn\height{height} \opn\embdim{emb\,dim}
\opn\codim{codim}  \opn\min{min} \opn\max{max}
\opn\Tr{Tr} \opn\bigrank{big\,rank}
\opn\superheight{superheight}\opn\lcm{lcm}
\opn\trdeg{tr\,deg}%\emph{
\opn\reg{reg} \opn\lreg{lreg} \opn\ini{in} \opn\lpd{lpd}
\opn\size{size}
\opn\div{div} \opn\Div{Div} \opn\cl{cl} \opn\Cl{Cl}
\opn\Spec{Spec} \opn\Supp{Supp} \opn\supp{supp} \opn\Sing{Sing}
\opn\Ass{Ass} \opn\Min{Min}
\opn\Ann{Ann} \opn\Rad{Rad} \opn\Soc{Soc}
\opn\Im{Im} \opn\Ker{Ker} \opn\Coker{Coker} \opn\Am{Am}
\opn\Hom{Hom} \opn\Tor{Tor} \opn\Ext{Ext} \opn\End{End}
\opn\Aut{Aut} \opn\id{id}  \opn\deg{deg}
\opn\nat{nat}
\opn\pff{pf}%   \pf exists already
\opn\Pf{Pf} \opn\GL{GL} \opn\SL{SL} \opn\mod{mod} \opn\ord{ord}
\opn\Gin{Gin} \opn\Hilb{Hilb}
\opn\aff{aff} \opn\con{conv} \opn\relint{relint} \opn\st{st}
\opn\lk{lk} \opn\cn{cn} \opn\core{core} \opn\vol{vol}
\opn\link{link} \opn\star{star}
\opn\gr{gr}
\def\pot#1#2{#1[\kern-0.28ex[#2]\kern-0.28ex]}
\opn\dirlim{\underrightarrow{\lim}}
\opn\inivlim{\underleftarrow{\lim}}
\let\iso=\cong
\let\to=\rightarrow
\def\Implies{\ifmmode\Longrightarrow \else
        \unskip${}\Longrightarrow{}$\ignorespaces\fi}
\def\implies{\ifmmode\Rightarrow \else
        \unskip${}\Rightarrow{}$\ignorespaces\fi}
\def\iff{\ifmmode\Longleftrightarrow \else
        \unskip${}\Longleftrightarrow{}$\ignorespaces\fi}
\newtheorem{Theorem}{Theorem}[section]
\newtheorem{Lemma}[Theorem]{Lemma}
\newtheorem{Corollary}[Theorem]{Corollary}
\newtheorem{Proposition}[Theorem]{Proposition}
\newtheorem{Remark}[Theorem]{Remark}
\newtheorem{Example}[Theorem]{Example}
\let\epsilon\varepsilon
\let\phi=\varphi
\let\kappa=\varkappa
\def\qed{\ifhmode\textqed\fi
      \ifmmode\ifinner\quad\qedsymbol\else\dispqed\fi\fi}
\def\textqed{\unskip\nobreak\penalty50
       \hskip2em\hbox{}\nobreak\hfil\qedsymbol
       \parfillskip=0pt \finalhyphendemerits=0}
\def\dispqed{\rlap{\qquad\qedsymbol}}
\opn\dis{dis}
\def\pnt{{\raise0.5mm\hbox{\large\bf.}}}
\opn\Lex{Lex}
\begin{document}
\title{\bf Depth of  factors of square free  monomial  ideals}

\author{ Dorin Popescu }

\thanks{The  support from  grant ID-PCE-2011-1023 of Romanian Ministry of Education, Research and Innovation is gratefully acknowledged.}

\address{Dorin Popescu,  "Simion Stoilow" Institute of Mathematics of Romanian Academy, Research unit 5,
 P.O.Box 1-764, Bucharest 014700, Romania}
\email{dorin.popescu@imar.ro}

\maketitle
\begin{abstract}
Let $I$ be an ideal of a polynomial algebra  over a field, generated by $r$ square free monomials of degree $d$. If $r$ is bigger  (or equal,  if $I$ is not principal) than the  number of square free monomials of $I$ of degree $d+1$, then $\depth_SI= d$. Let $J\subsetneq I$, $J\not =0$ be generated by square free  monomials of degree $\geq d+1$. If $r$
is bigger
than the number of square free monomials of $I\setminus J$ of degree $d+1$, or more generally the Stanley depth of $I/J$ is $d$, then  $\depth_SI/J= d$. In particular,  Stanley's Conjecture holds in theses cases.
  \vskip 0.4 true cm
 \noindent
  {\it Key words } : Monomial Ideals,  Depth, Stanley depth.\\
 {\it 2000 Mathematics Subject Classification: Primary 13C15, Secondary 13F20, 13F55,
13P10.}
\end{abstract}

\section*{Introduction}

Let $S=K[x_1,\ldots,x_n]$ be the polynomial algebra in $n$ variables over a field $K$,  $d$  a positive integer   and $I\supsetneq J$, be two  square free monomial ideals of $S$ such that $I$ is generated in degrees $\geq d$, respectively $J$ in degrees $d+1$. Let $\rho_d(I)$ be the number of all square free monomials of degree $d$ of $I$. It is easy to note (see Lemma \ref{d}) that
$\depth_SI/J\geq d$. Our Theorem \ref{main} gives a sufficient condition which implies $\depth_SI/J=d$, namely this happens when
$$ \rho_d(I)>\rho_{d+1}(I)-\rho_{d+1}(J).$$
 Suppose that this condition holds. Then the Stanley depth of $I/J$ (see \cite{S}, \cite{HVZ}, or here Remark \ref{st1}) is $d$ and if  Stanley's Conjecture holds then   $\depth_SI/J\leq d$, that is the missing inequality. Thus  to test  Stanley's Conjecture means to test the equality $\depth_SI/J=d$, which is much easier since there exist very good algorithms to compute $\depth_SI/J$ but not so good to compute the Stanley depth of $I/J$. After a lot of examples computed with the computer algebra system SINGULAR we understood that a result as Theorem \ref{main} is believable.  The above condition is not necessary as  Example \ref{gen} shows.   Necessary and sufficient conditions could be possible found  classifying some posets (see Remark \ref{new}) but this is not the subject of this paper.

 The proof of Theorem \ref{main} uses the Koszul homology and can be read without other preparation. Our first section gives easy proofs in special cases, but they are mainly an  introduction in the subject.  Remarks \ref{rem1}, \ref{rem2} show that the Koszul homology seems to be the best tool in our problems. Section $2$ starts with one example, where we give the idea of the proof of Theorem \ref{main}.

  If $I$ is generated by more (or equal, if $I$ is not principal) square free monomials of degree $d$ than ${n\choose d+1}$, or more general than $\rho_{d+1}(I)$, then $\depth_SI=d$ as shows our Corollary \ref{str}. This extends \cite[Corollary 3]{P3}, which was the starting point of our research, the proof there being  easier. Remark \ref{im} says that the condition of Corollary  \ref{str} is tight.

  The conditions above are  consequences of the fact that $\sdepth I/J=d$, as we explained in Remark \ref{st1}, and we saw that they imply $\depth I/J=d$. But what happens if we just suppose that $\sdepth I/J=d$? Then there exists a monomial square free ideal $I'\subset I$  such that $\rho_d(I')>\rho_{d+1}(I')-\rho_{d+1}(I'\cap J)$ using our Theorem \ref{nice} (somehow an extension of  \cite[Lemma 3.3]{Sh}) and  it  follows also $\depth_SI/J=d$ by our Theorem \ref{main2}.

 We owe  thanks to a Referee who found  gaps  in a preliminary version of our paper.

\section{Factors of square free monomial ideals}

Let $J\subsetneq I$,  be  two nonzero square free monomial ideals of $S$ and $d$ a positive integer. Let  $\rho_d(I)$ be the number of all square free monomials of degree $d$ of $I$. Suppose that  $I$ is generated by square free monomials $f_1,\ldots, f_r$, $r>0$, of degrees $\geq d$
and  $J$ is generated by square free monomials of degree $\geq d+1$. Set $s:=\rho_{d+1}(I)-\rho_{d+1}(J)$ and let $b_1,\ldots,b_s$ be the square free monomials of $I\setminus J$ of degree $d+1$.
\begin{Lemma}\label{d}  $\depth_S I,\ \depth_S I/J\geq d$.
\end{Lemma}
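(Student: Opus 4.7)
The plan is to reduce both depth bounds to a single auxiliary claim: \emph{for any nonzero squarefree monomial ideal $L\subset S$ generated in degrees $\geq e$, one has $\depth_S S/L\geq e-1$}. Applied to $L=I$, $e=d$ this gives $\depth_S S/I\geq d-1$, and hence $\depth_S I\geq d$ by the depth lemma applied to $0\to I\to S\to S/I\to 0$ (using $\depth S = n \geq d$). Applied to $L=J$, $e=d+1$ it gives $\depth_S S/J\geq d$. The short exact sequence
$$0\to I/J\to S/J\to S/I\to 0$$
together with the depth lemma then produces
$$\depth_S I/J\geq \min\bigl(\depth_S S/J,\ \depth_S S/I+1\bigr)\geq d,$$
settling the second half of the statement.

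For the auxiliary claim, I would invoke the Auslander--Buchsbaum formula and prove the sharper bound $\pd_S S/L\leq n-e+1$. This rests on two elementary vanishings for the graded Betti numbers of $S/L$. First, because $L$ is generated in degrees $\geq e$, a short induction on $i$ using the minimality of the free resolution yields $\beta_{i,j}(S/L)=0$ whenever $j<e+i-1$ (each syzygy module lives in degrees strictly larger than the previous one). Second, because $L$ is squarefree, Hochster's formula shows that $\beta_{i,j}(S/L)$ is concentrated on squarefree multidegrees, so $\beta_{i,j}(S/L)=0$ for $j>n$. Combining the two, $\beta_{i,j}(S/L)\neq 0$ forces $e+i-1\leq n$, i.e.\ $i\leq n-e+1$, giving the projective dimension bound and hence $\depth_S S/L = n-\pd_S S/L \geq e-1$.

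The argument is essentially bookkeeping once those two Betti-number vanishings are in place, so there is no real obstacle; the only point needing an explicit reference is the squarefree concentration $\beta_{i,j}(S/L)=0$ for $j>n$, which is classical and is where the squarefreeness hypothesis enters. A resolution-free alternative would write $L=I_\Delta$ and observe that the hypothesis on generator degrees is equivalent to $\Delta$ containing the full $(e-2)$-skeleton of the simplex on $[n]$; then Reisner's criterion, together with the vanishing of reduced homology of links in low dimensions, yields $\depth_S S/I_\Delta\geq e-1$ directly, after which the rest of the argument proceeds as above.
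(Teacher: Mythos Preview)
Your argument is correct and follows essentially the same strategy as the paper: establish depth lower bounds for the ideals from the generation-degree hypothesis, then apply the Depth Lemma to a short exact sequence. The only differences are cosmetic: the paper simply cites \cite[Proposition 3.1]{HVZ} for the bounds $\depth_S I\geq d$ and $\depth_S J\geq d+1$ (which is exactly your auxiliary claim, stated for the ideal rather than its quotient), and it uses the sequence $0\to J\to I\to I/J\to 0$ instead of your $0\to I/J\to S/J\to S/I\to 0$; both give the same conclusion. Your write-up is thus a more self-contained version of the paper's proof, supplying the Betti-number argument (or the Reisner alternative) in place of the external citation.
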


\begin{proof} By \cite[Proposition 3.1]{HVZ} we have $\depth_S I\geq d$, $\depth_S J\geq d+1$. The conclusion follows from applying the Depth Lemma in the  exact sequence
$0\to J\to I\to I/J\to 0$.
\end{proof}

\begin{Lemma} \label{r} Suppose that $J=E+F$, $F\not \subset E$, where $E,F$ are  ideals generated by square free monomials of degree $d+1$, respectively $>d+1$. Then $\depth_SI/J=d$ if and only if $\depth_S I/E=d$.
\end{Lemma}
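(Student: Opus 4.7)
The strategy is to exploit the short exact sequence
\[
0 \to J/E \to I/E \to I/J \to 0,
\]
together with the isomorphism $J/E=(E+F)/E \cong F/(E\cap F)$, and to deduce the equivalence from the depth lemma once we have shown $\depth_S J/E \geq d+2$. Since Lemma~\ref{d} already gives $\depth_S I/E,\ \depth_S I/J \geq d$, the depth-lemma inequalities $\depth_S I/E \geq \min(\depth_S J/E, \depth_S I/J)$ and $\depth_S I/J \geq \min(\depth_S J/E - 1, \depth_S I/E)$, applied with $\depth_S J/E \geq d+2$, supply both implications at once: $\depth_S I/J \geq d+1 \Rightarrow \depth_S I/E \geq d+1$, and conversely.

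The real work is to prove $\depth_S J/E \geq d+2$. The naive depth lemma applied to $0 \to E\cap F \to F \to F/(E\cap F) \to 0$ gives only $d+1$, because $E\cap F$ can have generators of degree as low as $d+2$: whenever a generator $e$ of $E$ divides a generator $f$ of $F$, $\lcm(e,f)=f$ has degree $\geq d+2$. To remove this loss I would first reduce to the case where no generator of $F$ lies in $E$. Writing $F = F' + F''$ with $F'$ generated by the generators of $F$ outside $E$ and $F''$ by the rest, one has $F'' \subseteq E$, hence $J = E+F = E+F'$. A short computation (using that every $\lcm(f',f'')$ with $f' \in G(F')$, $f'' \in G(F'')$ lies in $F'\cap E$, being a multiple of $f''\in E$) then shows $F/(F\cap E) \cong F'/(F'\cap E)$. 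The hypothesis $F \not\subseteq E$ guarantees $F' \neq 0$, so we may replace $F$ by $F'$.

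Under this assumption no generator of $F$ is divisible by any generator of $E$, so for every $e \in G(E)$ and $f \in G(F)$ we have $\deg \lcm(e,f) > \deg f \geq d+2$, i.e.\ $\deg \lcm(e,f) \geq d+3$. Hence $E\cap F$ is generated in degrees $\geq d+3$, and \cite[Proposition 3.1]{HVZ} yields $\depth_S(E\cap F) \geq d+3$. Combined with $\depth_S F \geq d+2$, the depth lemma on $0 \to E\cap F \to F \to F/(E\cap F) \to 0$ delivers $\depth_S F/(E\cap F) \geq d+2$, as required. The main obstacle is precisely this reduction together with the degree bookkeeping: without first discarding the generators of $F$ that already lie in $E$, one only gets $\depth_S J/E \geq d+1$, and the depth lemma on the main sequence is then too weak to force the equivalence in both directions.
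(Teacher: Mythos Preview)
Your argument is correct and follows the same route as the paper: the same short exact sequence $0\to J/E\to I/E\to I/J\to 0$, the same isomorphism $J/E\cong F/(F\cap E)$, the same reduction to the case where no generator of $F$ lies in $E$, and the conclusion $\depth_S F/(F\cap E)\geq d+2$ followed by the Depth Lemma. The only cosmetic difference is that where the paper simply invokes Lemma~\ref{d} for the bound $\depth_S F/(F\cap E)\geq d+2$, you unpack that lemma's proof by quoting \cite[Proposition~3.1]{HVZ} and running the Depth Lemma on $0\to E\cap F\to F\to F/(E\cap F)\to 0$ explicitly.
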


\begin{proof} We may suppose that in $E$ there exist  no monomial generator of $F$.
In the exact sequence
$$0\to J/E\to I/E\to I/J\to 0$$
we see that the first end is isomorphic with $F/(F\cap E)$ and has depth $\geq d+2$ by Lemma \ref{d}. Apply the Depth Lemma and we are done.
\end{proof}

Before trying to extend the above lemma it is useful to see the next example.
\begin{Example} {\em Let $n=4$, $d=1$,  $I=(x_2)$, $E=(x_2x_4)$, $F=(x_1x_2x_3)$. Then $\depth_SI/E=3$ and $\depth_SI/(E+F)=2$.}
\end{Example}

\begin{Lemma} \label{r'} Let $H$ be an ideal  generated by square free monomials of degrees $d+1$. Then $\depth_SI/J=d$ if and only if $\depth_S (I+H)/J=d$.
\end{Lemma}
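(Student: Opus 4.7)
The plan is to exploit the exact sequence
$$0 \to I/J \to (I+H)/J \to (I+H)/I \to 0,$$
where $(I+H)/I \cong H/(H \cap I)$, and then apply the Depth Lemma. Lemma \ref{d} already gives $\depth_S I/J \ge d$ and $\depth_S (I+H)/J \ge d$ (since $I+H$ is still generated in degrees $\ge d$), so the equivalence of both depths being $d$ reduces to the key estimate
$$\depth_S H/(H \cap I) \ge d+1.$$
Granted this, the Depth Lemma forces $\depth_S I/J \le d$ iff $\depth_S (I+H)/J \le d$.

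To prove the key estimate, I would split $H = H' + H''$, where $H'$ is generated by the degree-$(d+1)$ generators of $H$ lying in $I$ and $H''$ by those not in $I$. (If $H'' = 0$, then $H \subseteq I$ and the statement is trivial.) Since $H' \subseteq I$, a short monomial-ideal calculation using distributivity of intersection over sum gives $H \cap I = H' + (H'' \cap I)$, and then the second isomorphism theorem yields
$$H/(H \cap I) \;\cong\; H''/\bigl((H'' \cap H') + (H'' \cap I)\bigr).$$
I would then verify that every generator of the denominator has degree $\ge d+2$. The generators of $H'' \cap H'$ are $\lcm(h'',h')$ for distinct squarefree monomials $h'',h'$ of degree $d+1$; two distinct squarefree monomials of the same degree cannot divide one another, so such lcms must have degree $\ge d+2$. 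The generators of $H'' \cap I$ are $\lcm(h'',g)$ with $h'' \in G(H'')$ and $g \in G(I)$; the condition $h'' \notin I$ means no $g$ divides $h''$, so again $\deg\lcm(h'',g) \ge d+2$. Thus $H''$ is generated in degree $d+1$ while the denominator is generated in degrees $\ge d+2$, and Lemma \ref{d} applied with $d$ shifted up by one gives the required $\depth_S H''/\bigl((H'' \cap H') + (H'' \cap I)\bigr) \ge d+1$.

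The conclusion then falls out of the Depth Lemma: if $\depth_S I/J = d$, the first inequality gives $\depth_S (I+H)/J \ge \min(d,d+1) = d$ while $d = \depth_S I/J \ge \min(\depth_S (I+H)/J, d+2)$ forces $\depth_S (I+H)/J \le d$, hence equality; the converse direction is symmetric. The main obstacle in the argument is the degree-shift step itself—verifying that the ideal by which $H''$ is quotiented sits in degrees $\ge d+2$—which rests on the elementary but essential fact that no two distinct squarefree monomials of a common degree divide one another, so any nontrivial lcm must strictly jump in degree.
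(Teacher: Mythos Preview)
Your proof is correct and follows essentially the same approach as the paper: both use the exact sequence $0 \to I/J \to (I+H)/J \to (I+H)/I \to 0$, establish $\depth_S (I+H)/I \ge d+1$ via Lemma~\ref{d} (since the relevant intersection ideal lies in degrees $\ge d+2$), and finish with the Depth Lemma. The only difference is that the paper reduces by induction to a single generator $u \notin I$, which makes the degree check on $I\cap(u)$ immediate, whereas you handle all generators at once through the $H'/H''$ split; the paper's route is shorter, but yours is equally valid.
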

\begin{proof}
By induction on the number of the generators of $H$ it is enough to consider the case $H=(u)$ for some square free monomial $u\not \in I$ of degrees $d+1$. In the exact sequence $$0\to I/J\to (I+(u))/J\to (I+(u))/I\to 0$$ we see that  the last term is isomorphic with $(u)/I\cap (u)$ and has depth $\geq d+1$ by Lemma \ref{d},  since $I\cap (u) $ has only monomials of degrees $>d+1$. Using the Depth Lemma the first term has depth $d$ if and only if the middle has depth $d$, which is enough.
\end{proof}
Using Lemmas \ref{r}, \ref{r'} we may suppose  always in our consideration that $I$, $J$ are generated in degree $d$, respectively $d+1$, in particular $f_i$   have degree $d$.
\begin{Lemma}\label{s+1} Let $e\leq r$  be a positive integer and  $I'=(f_1,\ldots,f_{e})$, $J'=J\cap I'$. If $\depth_S I'/J'=d$ then $\depth_S I/J=d$ .
\end{Lemma}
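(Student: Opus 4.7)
The plan is to use the short exact sequence
$$0 \to I'/J' \to I/J \to I/(I'+J) \to 0,$$
which arises from the isomorphism $I'/J' = I'/(I' \cap J) \cong (I'+J)/J$, and then apply the Depth Lemma. Since Lemma \ref{d} already gives $\depth_S I/J \geq d$, the task is to establish the reverse inequality $\depth_S I/J \leq d$ from the hypothesis $\depth_S I'/J' = d$.

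First I would show that $\depth_S I/(I'+J) \geq d$. Set $I_1 = (f_{e+1},\ldots,f_r)$, so that $I = I' + I_1$, and hence
$$I/(I'+J) \cong I_1/\bigl(I_1 \cap (I'+J)\bigr).$$
The key combinatorial observation is that $I_1 \cap (I'+J)$ is a square free monomial ideal generated in degree $\geq d+1$: any square free monomial $m$ in the intersection is divisible by some $f_j$ with $j>e$ and, lying in $I'+J$, it must also be divisible either by some $f_i$ with $i\leq e$ or by some generator of $J$. In the first subcase $m$ is divisible by $\lcm(f_i,f_j)$, which has degree $\geq d+1$ since $f_i\neq f_j$ are distinct square free monomials of degree $d$; in the second subcase $m$ is divisible by a generator of $J$, which has degree $\geq d+1$ by assumption. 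Applying Lemma \ref{d} to $I_1 \supset I_1 \cap (I'+J)$ then gives $\depth_S I/(I'+J) \geq d$.

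Next I would apply the Depth Lemma to the short exact sequence above to obtain
$$d = \depth_S I'/J' \;\geq\; \min\bigl\{\depth_S I/J,\ \depth_S I/(I'+J) + 1\bigr\} \;\geq\; \min\bigl\{\depth_S I/J,\ d+1\bigr\}.$$
This inequality forces $\depth_S I/J \leq d$, and combined with Lemma \ref{d} we conclude $\depth_S I/J = d$.

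The only real obstacle is the combinatorial verification that $I_1 \cap (I'+J)$ is generated in degree $\geq d+1$, but this reduces to the basic fact that the least common multiple of two distinct square free monomials of degree $d$ has degree at least $d+1$. Everything else is a routine Depth Lemma computation of the same flavor as in Lemmas \ref{r} and \ref{r'}.
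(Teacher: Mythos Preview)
Your proposal is correct and follows essentially the same approach as the paper: the same short exact sequence $0\to I'/J'\to I/J\to I/(I'+J)\to 0$, the same identification $I/(I'+J)\cong (f_{e+1},\ldots,f_r)/((J+I')\cap(f_{e+1},\ldots,f_r))$, and the same Depth Lemma argument. You simply supply more detail for the claim that $(J+I')\cap(f_{e+1},\ldots,f_r)$ is generated in degree $\geq d+1$, which the paper states without elaboration.
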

\begin{proof} In the exact sequence
$$0\to I'/J'\to I/J\to I/(I'+J)\to 0$$
the right end has depth $\geq d$ by  Lemma  \ref{d} because $$I/(I'+J)\cong (f_{e+1},\ldots,f_r)/((J+I')\cap (f_{e+1},\ldots,f_r)))$$ and
$(J+I')\cap (f_{e+1},\ldots,f_r)$ is generated by monomials of degree $>d$. If the left end has depth $d$ then the middle has the same depth by the Depth Lemma.
\end{proof}

\begin{Lemma}\label{eq} Suppose that there exists $i\in [r]$ such that $f_i$ has in $J$ all square free multiples of degree $d+1$. Then $\depth_S I/J=d$.
\end{Lemma}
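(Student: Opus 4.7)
By the reductions recorded above via Lemmas \ref{r} and \ref{r'}, one may assume that $I$ is generated in degree exactly $d$ and $J$ in degree exactly $d+1$. Then each $f_j$ is square-free of degree $d$; after relabeling, $f_i = x_1\cdots x_d$, and set $T_0 = \{d+1,\ldots,n\}$. Lemma \ref{d} already gives $\depth_S I/J \geq d$, so the remaining task is to prove $\depth_S I/J \leq d$. The plan is to exhibit a nonzero class in the Koszul homology $H_{n-d}(\xb;I/J)$, where $\xb = (x_1,\ldots,x_n)$: since $\xb$ is a regular sequence on $S$, Koszul self-duality yields $H_{n-d}(\xb;M) \cong H^d(\xb;M) = \Ext^d_S(K,M)$ for every $S$-module $M$, so nonvanishing of this group forces $\depth_S I/J \leq d$.

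The cycle I propose is
\[
z \ :=\ f_i\otimes e_{d+1}\wedge e_{d+2}\wedge\cdots\wedge e_n \ \in\ K_{n-d}(\xb;I/J).
\]
Applying the Koszul differential produces a sum whose coefficients are the monomials $x_k f_i$ for $k\in T_0$. Each $x_k f_i$ is a square-free multiple of $f_i$ of degree $d+1$, and by the hypothesis of the lemma all such multiples lie in $J$; hence $\partial z = 0$ in $K_{n-d-1}(\xb;I/J)$.

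It remains to verify that $z$ is not a boundary. If $z = \partial y$ for some $y = \sum_{|U|=n-d+1} m_U e_U$ with $m_U\in I/J$, then equating the coefficients of $e_{T_0}$ on both sides gives
\[
\sum_{k=1}^{d} \pm\, x_k\, m_{T_0\cup\{k\}} \ =\ f_i \quad \text{in } I/J.
\]
Lifting each $m_{T_0\cup\{k\}}$ to some $\tilde m_k\in I$, this becomes a polynomial identity $f_i = \sum_{k=1}^{d} \pm x_k\tilde m_k - j$ for some $j\in J$. Every monomial of $\tilde m_k\in I$ has degree $\geq d$ (since $I$ is generated in degree $d$), so every monomial of $x_k\tilde m_k$ has degree $\geq d+1$; and every monomial of $j\in J$ has degree $\geq d+1$. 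The right-hand side therefore lives entirely in degree $\geq d+1$, while the left-hand side is a single monomial of degree $d$, a contradiction. Hence $[z]\neq 0$ in $H_{n-d}(\xb;I/J)$ and $\depth_S I/J = d$, as required.

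There is no serious technical obstacle: the only piece of creative input is the choice of $z$, indexed by exactly the variables that do not divide $f_i$. That choice is forced by the hypothesis (so the chain $z$ will become a cycle modulo $J$, even though it is not a cycle in $I$ itself), and once it is made the degree count is essentially immediate.
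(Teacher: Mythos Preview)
Your argument is correct. The cycle $z=f_i\,e_{d+1}\wedge\cdots\wedge e_n$ is indeed a Koszul cycle modulo $J$ by the hypothesis, and the degree count showing it is not a boundary is sound; the conclusion $\depth_S I/J\le d$ then follows from \cite[Theorem 1.6.17]{BH} (equivalently, from your duality remark).

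However, this is not the route the paper takes for the lemma itself. The paper's proof is more elementary: after setting $i=1$, it observes that the hypothesis means $J:f_1$ contains the $n-d$ variables not dividing $f_1$, so $S/(J:f_1)$ has depth $d$. When $r=1$ one has $I/J\cong S/(J:f_1)$ and the result is immediate; when $r>1$ one invokes Lemma~\ref{s+1} with $e=1$ to pass from $(f_1)/(J\cap(f_1))$ to $I/J$. Your Koszul argument is precisely the alternative proof the paper records in Remark~\ref{rem1}, presented there explicitly as a second method ``using stronger tools.'' The trade-off is clear: the paper's proof needs only the colon ideal and the Depth Lemma and avoids any homological machinery, while your approach, though heavier for this lemma, is the template that scales to the general case $r>s$ in Theorem~\ref{main}, where one must build a cycle as a $K$-linear combination of several $f_j e_{\sigma_j}$.
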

\begin{proof} We may suppose $i=1$.  By our hypothesis  $J:f_1$ is generated by
$(n-d)$ variables. If $r=1$ then the depth of $I/J\iso S/(J:f_1)$ is $d$.
If $r>1$ apply the above lemma for $e=1$.
\end{proof}

\begin{Remark}\label{rem1}{\em Suppose in the proof of the above lemma that $f_1=x_1\cdots x_d$. Then the hypothesis says that $(x_{d+1},\ldots,x_n) f_1\subset J$. It follows that $z=f_1 e_{\sigma_1}$, $ e_{\sigma_1}=e_{d+1}\wedge \ldots \wedge e_n$  induces a nonzero element in the Koszul homology module $H_{n-d}(x;I/J)$ of $I/J$ (some details from Koszul homology theory are given in Example \ref{e2}). Thus $\depth_S I/J\leq d$ by \cite[Theorem 1.6.17]{BH}, the other inequality follows from Lemma \ref{d}. This gives a different proof of the above lemma using stronger tools, which will be very useful in the next section. We also remind that $H_{n-d}(x;I/J)\cong \Tor_{n-d}^S(K,I/J)\not =(0)$ gives  $\pd_SI/J\geq n-2$,  which means $\depth_SI/J\leq 2$ by Auslander-Buchsbaum Theorem \cite[Theorem 1.3.3]{BH}.}
\end{Remark}
\begin{Lemma}\label{g} Suppose that $r\geq 2$ and the least common multiple $b=[f_1,f_2]$ has degree $d+1$ and it is the only monomial of degree $d+1$ which is in $(f_1,f_2)\setminus J$. Then $\depth_S I/J=d$.
\end{Lemma}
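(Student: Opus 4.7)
The plan is to extend the Koszul-homology argument of Remark~\ref{rem1}: I will produce an explicit nonzero class in $H_{n-d}(x;I/J)$, which by \cite[Theorem~1.6.17]{BH} yields $\depth_S I/J\leq d$, and then combine with Lemma~\ref{d}. The new difficulty is that, in contrast to Lemma~\ref{eq}, neither $f_1$ nor $f_2$ alone produces such a class, since $x_{d+1}f_1=x_df_2=b\notin J$; the strategy is to exploit the syzygy $x_{d+1}f_1-x_df_2=0$ to package $f_1$ and $f_2$ into a single cycle.

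Using Lemmas~\ref{r} and~\ref{r'} I first reduce to the case where $I$ is generated in degree $d$ and $J$ in degree $d+1$, and after relabeling variables I may assume
$$f_1=x_1\cdots x_{d-1}x_d,\qquad f_2=x_1\cdots x_{d-1}x_{d+1},\qquad b=x_1\cdots x_{d+1}.$$
The hypothesis then forces $x_kf_1,x_kf_2\in J$ for every $k\in\{d+2,\ldots,n\}$, since these are precisely the degree $d+1$ monomials of $(f_1,f_2)$ distinct from $b$. Setting $\sigma_1=\{d+1,d+2,\ldots,n\}$ and $\sigma_2=\{d,d+2,\ldots,n\}$, the proposed cycle is
$$z=f_1\,e_{\sigma_1}-f_2\,e_{\sigma_2}\in K_{n-d}(x;I/J).$$
In $\partial(f_1e_{\sigma_1})$ every summand with $k\geq d+2$ vanishes modulo $J$, leaving only the position-$1$ contribution $x_{d+1}f_1\cdot e_{d+2,\ldots,n}=b\cdot e_{d+2,\ldots,n}$; the analogous computation on $f_2e_{\sigma_2}$, where position $1$ is the index $d$, gives $x_df_2\cdot e_{d+2,\ldots,n}=b\cdot e_{d+2,\ldots,n}$ with the same sign, so $\partial z=0$ in $K_{n-d-1}(x;I/J)$.

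Finally I check that $z$ is a nonzero class. It is nonzero in $K_{n-d}(x;I/J)$ because $e_{\sigma_1}\neq e_{\sigma_2}$ while $f_1,f_2\notin J$ (both have degree $d$, but $J$ is generated in degree $d+1$). For non-exactness I use a degree count: $z$ is homogeneous of internal degree $n$, so a hypothetical preimage $w\in K_{n-d+1}(x;I/J)$ with $\partial w=z$ would have every coefficient in $(I/J)_{d-1}$, a space that vanishes because $I$ has no elements of degree less than $d$. Thus $w=0\neq z$, and $H_{n-d}(x;I/J)\neq 0$, giving $\depth_S I/J\leq d$; Lemma~\ref{d} provides the reverse inequality. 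The main obstacle is locating this cycle: each of $f_1 e_{d+2,\ldots,n}$ and $f_2 e_{d+2,\ldots,n}$ separately is a cycle only in $K_{n-d-1}$ (which would yield at best $\depth\leq d+1$), so the essential point is to promote them to a genuine $K_{n-d}$-cycle via the relation $x_{d+1}f_1=x_df_2$.
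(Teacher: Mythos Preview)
Your argument is correct, but it is not the route the paper takes for Lemma~\ref{g}. The paper's proof is elementary: for $r=2$ it writes $f_1=x_1u$, $f_2=x_2u$ with $u=x_3\cdots x_{d+1}$ (after relabeling), observes that $x_kf_i\in J$ for all $k>d+1$, so $I/J$ is annihilated by $(x_{d+2},\ldots,x_n)$ and is therefore a finitely generated module over the polynomial ring $K[x_1,\ldots,x_{d+1}]$ of dimension $d+1$; since $I/J$ is not free over this ring (the syzygy $x_2f_1-x_1f_2=0$ is nontrivial), Auslander--Buchsbaum gives $\depth_S I/J\leq d$. The case $r>2$ is then reduced to $r=2$ via Lemma~\ref{s+1}. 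Your Koszul-homology construction is exactly the alternative that the paper sketches, without details, in Remark~\ref{rem2}. It buys you two things: it handles all $r\geq 2$ at once without invoking Lemma~\ref{s+1}, and it rehearses precisely the mechanism used later in the proof of Theorem~\ref{main}. The paper's route, by contrast, is self-contained and avoids appealing to \cite[Theorem~1.6.17]{BH}.
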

\begin{proof} Apply induction on $r\geq 2$. Suppose that $r=2$. By hypothesis the greatest common divisor $u=(f_1,f_2)$ have degree $d-1$ and after renumbering the variables we may suppose that $f_i=x_iu$ for $i=1,2$. By hypothesis the square free multiples of $f_1,f_2$ by variables $x_i$, $i>2$ belongs to $J$. Thus we  see that $I/J$ is a finite module over a polynomial ring in $(d+1)$-variables and we get $\depth_S I/J\leq d$ since $I/J$ it is not free. Now it is enough to apply Lemma \ref{d}.
If $r>2$ then apply Lemma \ref{s+1} for $e=2$.
\end{proof}
\begin{Remark}\label{rem2} {\em  We see in the proof of the above lemma (similarly as in Remark \ref{rem1}) that if $u=x_{n-d+2}\cdots x_n$
then $z=f_1 e_{\sigma_1}-f_2 e_{\sigma_2}$, $ e_{\sigma_1}=e_2\wedge \ldots \wedge e_{n-d+1}$, $ e_{\sigma_2}=e_1\wedge e_{3}\wedge \ldots \wedge e_{n-d+1}$ induces a nonzero element in $H_{n-d}(x;I/J)$. Thus $\depth_S I/J\leq d$ again by \cite[Theorem 1.6.17]{BH}.}
\end{Remark}
\begin{Proposition}\label{p} Let $b_1,\ldots,b_s$ be the monomials of degree $d+1$ from $I\setminus J$. Suppose that $r>s$ and for each $i\in [r]$ there exists at most one $j\in [s]$ with $f_i|b_j$. Then $\depth_S I/J=d$.
\end{Proposition}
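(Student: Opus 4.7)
The plan is to reduce to one of the preparatory lemmas \ref{eq} or \ref{g}, depending on how the generators $f_i$ distribute over the monomials $b_j$. First I would invoke Lemmas \ref{r} and \ref{r'} to assume without loss of generality that $I$ is generated in degree exactly $d$ and $J$ in degree exactly $d+1$, so that each $b_j$ has the shape $x_k f_i$ for some variable $x_k \nmid f_i$.

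For each $i \in [r]$, the hypothesis lets me define a partial function $\sigma$ on $[r]$ by setting $\sigma(i)$ to be the unique $j$ with $f_i \mid b_j$ if such a $j$ exists. I would then split into two cases. If $\sigma(i)$ is undefined for some $i$, then every square free multiple $x_k f_i$ of degree $d+1$ with $x_k \nmid f_i$ must lie in $J$, since any such multiple outside $J$ would itself be one of the $b_j$ divisible by $f_i$; Lemma \ref{eq} then delivers $\depth_S I/J = d$. If $\sigma$ is defined on all of $[r]$, then the map $\sigma\colon [r] \to [s]$ cannot be injective because $r > s$, so some value $j$ has two preimages $i_1 \neq i_2$. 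Both $f_{i_1}$ and $f_{i_2}$ divide $b_j$, which forces $[f_{i_1}, f_{i_2}] = b_j$, since their lcm has degree at least $d+1$ and divides a degree $d+1$ monomial. Any other square free degree $d+1$ monomial in $(f_{i_1}, f_{i_2})\setminus J$ would be some $x_k f_{i_1}$ or $x_k f_{i_2}$ different from $b_j$ and not in $J$, i.e.\ a second $b$ divisible by one of $f_{i_1}, f_{i_2}$, contradicting the ``at most one'' hypothesis. After renumbering so that $f_{i_1}, f_{i_2}$ come first, Lemma \ref{g} applies and yields $\depth_S I/J = d$.

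The step I expect to be the main nuisance is the second case: squeezing the ``at most one divisor'' assumption to bear on both $f_{i_1}$ and $f_{i_2}$ simultaneously, so as to verify not just the lcm condition but also the uniqueness of $b_j$ as a degree $d+1$ square free monomial in $(f_{i_1}, f_{i_2})\setminus J$ that Lemma \ref{g} demands. Once that bookkeeping is clean, the proposition is a short pigeonhole dichotomy on top of the previous two lemmas.
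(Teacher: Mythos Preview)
Your proposal is correct and follows essentially the same route as the paper: a pigeonhole dichotomy that reduces to Lemma~\ref{eq} when some $f_i$ has no $b_j$ above it, and to Lemma~\ref{g} when two generators share a common $b_j$. You spell out more carefully than the paper does why the hypotheses of Lemma~\ref{g} (both the lcm equality and the uniqueness of $b_j$ in $(f_{i_1},f_{i_2})\setminus J$) are met, but the underlying argument is the same.
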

\begin{proof} If there exists $i\in [r]$ such that $f_i$ has in $J$ all square free multiples of degree $d+1$, then we apply Lemma  \ref{eq}. Otherwise, each $f_i$ has a square free multiple of degree $d+1$ which is not in $J$.   By hypothesis, there exist $i,j\in [r]$, $i\not =j$ such that $f_i,f_j$ have the same multiple $b$ of degree $d+1$ in $I\setminus J$. Now apply the above lemma.
\end{proof}
\begin{Corollary}\label{1}
 Suppose that $r>s\leq 1$.  Then $\depth_S I/J=d$.
\end{Corollary}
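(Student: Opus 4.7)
The plan is to observe that Corollary \ref{1} is an immediate specialization of Proposition \ref{p}. Proposition \ref{p} requires two hypotheses: (a) $r>s$, and (b) for every $i\in[r]$ there exists at most one $j\in[s]$ with $f_i\mid b_j$. Hypothesis (a) is given. Hypothesis (b) is automatic the moment $|[s]|\leq 1$: if $s=0$ it holds vacuously since the index set is empty, and if $s=1$ the only possible $j$ is $j=1$, so the number of $j$'s dividing any $f_i$ is at most one.

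Accordingly, the proof is simply to invoke Proposition \ref{p}. I would nevertheless mention the two cases for transparency. When $s=0$, every square free multiple of every $f_i$ of degree $d+1$ already lies in $J$, and the conclusion drops out of Lemma \ref{eq} (this is in fact the first branch in the proof of Proposition \ref{p}). When $s=1$, either some $f_i$ fails to divide $b_1$ and Lemma \ref{eq} applies to that $f_i$, or else every $f_i$ divides $b_1$, in which case, since $r\geq 2$, the pair $f_1,f_2$ has $[f_1,f_2]\mid b_1$ of degree $\geq d+1$, forcing $[f_1,f_2]=b_1$, and this is the only degree $d+1$ monomial of $(f_1,f_2)\setminus J$, so Lemma \ref{g} concludes.

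There is no real obstacle here; the only thing to verify is that the edge case $r=1$ (necessarily paired with $s=0$) is handled correctly, which it is, because Lemma \ref{eq} explicitly covers the $r=1$ situation (giving $I/J\cong S/(J:f_1)$ with $J:f_1$ generated by $n-d$ variables, hence depth exactly $d$). Therefore, in every subcase $r>s\leq 1$ the hypotheses of Proposition \ref{p} are met and $\depth_S I/J=d$ follows.
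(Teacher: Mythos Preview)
Your proposal is correct and matches the paper's approach: the corollary is stated immediately after Proposition~\ref{p} with no separate proof, since when $s\leq 1$ the condition ``at most one $j\in[s]$ with $f_i\mid b_j$'' is trivially satisfied. Your additional unpacking of the $s=0$ and $s=1$ subcases and the $r=1$ edge case is accurate and simply makes explicit what the proof of Proposition~\ref{p} already handles.
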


\begin{Proposition}\label{2}  Suppose that $r>s=2$.  Then $\depth_S I/J=d$.
\end{Proposition}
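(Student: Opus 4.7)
The plan is to establish $\depth_S I/J \leq d$ by producing a nonzero class in the Koszul homology module $H_{n-d}(\xb; I/J)$ and invoking \cite[Theorem 1.6.17]{BH}; the reverse inequality is given by Lemma \ref{d}. First I would reduce, via Lemmas \ref{r} and \ref{r'}, to the case where $I$ is generated in degree exactly $d$ by $f_1,\ldots,f_r$ and $J$ in degree exactly $d+1$, so that $b_1,b_2$ are the only square free monomials of degree $d+1$ in $I\setminus J$.

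For each $i\in[r]$ set $\sigma_i=[n]\setminus\supp(f_i)$, a subset of size $n-d$, and look for a cycle
\[
z \;=\; \sum_{i=1}^r \alpha_i\, f_i\, e_{\sigma_i}\;\in\; K_{n-d}(\xb; I/J),\qquad \alpha_i\in K,
\]
living in multidegree $(1,\ldots,1)$. This is the only possible shape of such an element, since a nonzero multidegree-$(1,\ldots,1)$ class in $(I/J)\, e_\sigma$ forces the coefficient to be a square free monomial of degree $d$ in $I\setminus J$, that is, one of the $f_i$'s (because $I$ is generated in degree $d$ and $J$ in degree $d+1$).

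Applying the Koszul differential, $\partial(f_i\, e_{\sigma_i}) = \sum_{k\in\sigma_i}\pm\,x_k f_i\, e_{\sigma_i\setminus k}$; summands with $x_k f_i\in J$ vanish modulo $J$, while the rest all have the shape $\pm\, b_j\, e_{[n]\setminus\supp(b_j)}$, one contribution for each pair $(i,j)$ with $f_i\mid b_j$. Setting $I_j=\{i:f_i\mid b_j\}$ and writing $\epsilon_{i,j}\in\{\pm1\}$ for the induced signs, the equation $\partial z=0$ reduces to the homogeneous $K$-linear system
\[
\sum_{i\in I_j}\epsilon_{i,j}\,\alpha_i \;=\;0,\qquad j=1,2.
\]
Since $r>s=2$, this system of $2$ equations in $r$ unknowns admits a nontrivial solution $(\alpha_i)\in K^r$, yielding a cycle $z\neq 0$ at the chain level (the $f_i\,e_{\sigma_i}$ are $K$-linearly independent in $K_{n-d}(\xb; I/J)$ since distinct $f_i$ give distinct $\sigma_i$ and each $f_i$ is nonzero in $I/J$).

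Finally I would argue that $[z]\neq 0$ in $H_{n-d}(\xb; I/J)$: any boundary in $K_{n-d}(\xb; I/J)$ of multidegree $(1,\ldots,1)$ is the image under $\partial$ of an element of $K_{n-d+1}(\xb; I/J)$ of the same multidegree, and such an element would require a class in $I/J$ represented by a square free monomial of degree $d-1$; no such class exists, since $I$ is generated in degree $d$. Hence the boundary piece in this multidegree vanishes and $[z]\neq 0$, giving $H_{n-d}(\xb; I/J)\neq 0$ and $\depth_S I/J\leq d$. The only real obstacle is the sign bookkeeping in $\partial z$, which is routine once the surviving terms are grouped by $b_j$; the conceptual point is that $\partial z$ sees only the $s$ ``escape'' directions out of $J$, so $r>s$ always produces a nontrivial cycle, extending the mechanism already foreshadowed by Remarks \ref{rem1} and \ref{rem2}.
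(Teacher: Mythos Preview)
Your argument is correct, but it is not the proof the paper gives for this proposition. The paper's proof of Proposition~\ref{2} is entirely elementary: it reduces via Lemma~\ref{s+1} to $r=3$, rules out easy configurations with Lemma~\ref{eq}, Proposition~\ref{p}, and Lemma~\ref{g}, and then handles the remaining situation (where $f_1=(b_1,b_2)$, $f_2\mid b_1$, $f_3\mid b_2$) by an explicit change of variables down to a polynomial ring in $d+2$ variables, checking the depth by hand in two cases. Your route is instead the Koszul homology argument that the paper reserves for Section~2, and in fact your proof is word-for-word the proof of Theorem~\ref{main} specialized to $s=2$: you solve an $s\times r$ homogeneous linear system to build a cycle $z=\sum_i\alpha_i f_i e_{\sigma_i}$ in $K_{n-d}(\xb;I/J)$, and observe that nothing in multidegree $(1,\dots,1)$ can be a boundary because $I$ has no generators below degree $d$. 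What you gain is uniformity and brevity---nothing in your argument uses $s=2$, so you have really proved the general theorem; what the paper's case analysis buys is a self-contained, homology-free treatment, which is the declared purpose of Section~1 (the paper explicitly calls these ``easy proofs in special cases'' serving as an introduction, with Remarks~\ref{rem1} and~\ref{rem2} pointing toward the method you chose).
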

\begin{proof} Using Lemma \ref{s+1} for $e=3$ we reduce to the case $r=3$. By Lemma  \ref{eq} we may  suppose that each $f_i$ divides $b_1$, or $b_2$. By Proposition \ref{p}  we may suppose that $f_1|b_1$,  $f_1|b_2$, that is $f_1$ is the greatest common divisor $(b_1,b_2)$. Assume that $f_2|b_1$.
If $f_2|b_2$ then we get $f_2=(b_1,b_2)=f_1$, which is false. Similarly, if $f_3|b_1$  then $f_3\not |b_2$ and we may apply Lemma \ref{g}  to $f_2,f_3$. Thus we reduce to the case when  $f_3|b_2$ and $f_3\not |b_1$. We may suppose that $b_1=x_1f_1$, $b_2=x_2f_1$ and $x_1,x_2$ do not divide $f_1$ because $b_i$ are square free. It follows that $b_1=x_if_2$, $b_2=x_jf_3$ for some $i,j>2$ with $x_i,x_j|f_1$.

{\bf Case $i=j$}

Then we may suppose $i=j=3$ and $f_1=x_3u$ for a square free monomial $u$ of degree $d-1$. It follows that $f_2=x_2u$, $f_3=x_1u$. Let $S'$ be the polynomial subring
of $S$ in the variables $x_1,x_2,x_3$ and those dividing $u$. Then for each variable $x_k\not \in S'$ we have $f_ix_k\in J$ and so $I/J\cong I'/J'$, where $I'=I\cap S'$, $J'=J\cap S'$. Changing from $I,J,S$ to $I',J',S'$  we may suppose that $n=d+2$ and $u=\Pi_{i>3}^n x_i$.
Then $I/J\cong (I:u)/(J:u)\cong (x_1,x_2,x_3)S/(x_1x_2)S$.  Then $\depth_S I/J=d-1+\depth_T (x_1,x_2,x_3)T/(x_1x_2)$. By Lemma \ref{r} it is enough to see that $\depth_T (x_1,x_2,x_3)T/(x_1x_2)T=1$.

{\bf Case $i\not=j$}

Then we may suppose $i=3$, $j=4$ and $f_1=x_3x_4v$ for a square free monomial $v$ of degree $d-2$.  It follows that $f_2=x_1f_1/x_3=x_1x_4v$, $f_3=x_2f_1/x_4=x_2x_3v$. Let $S''$ be the polynomial subring
of $S$ in the variables $x_1,x_2,x_3,x_4$ and those dividing $v$.  As above $I/J\cong I''/J''$, where $I''=I\cap S''$, $J''=J\cap S''$. Changing from $I,J,S$ to $I'',J'',S''$  we may suppose that $n=d+2$ and $v=\Pi_{i>4}^n x_i$. Then
$$I/J\cong (I:v)/(J:v)\cong (x_1x_4,x_2x_3,x_3x_4)S/(x_1x_2x_3,x_1x_2x_4)S.$$
   Then $$\depth_S I/J=d-2+\depth_{T'} (x_1x_4,x_2x_3,x_3x_4)T'/(x_1x_2x_3,x_1x_2x_4)T'.$$ By Lemma \ref{r} it is enough to see that $$\depth_{T'} (x_1x_4,x_2x_3,x_3x_4)T'/(x_1x_2x_3,x_1x_2x_4)T'=2.$$
\end{proof}

\begin{Lemma}\label{use}   Suppose that $d=1$, $f_i=x_i$, $i\in [r]$ and $b_j\in S'=K[x_1,\ldots,x_r]$ for all $j\in [s]$.  Then $\depth_S I/J=1$ independently of $r,s$ ($s$ may be greater than $r$).
\end{Lemma}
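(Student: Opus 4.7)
My plan is to reduce from $S$ to the smaller polynomial ring $S':=K[x_1,\dots,x_r]$, exploiting the fact that the hypothesis forces $I/J$ to be annihilated by $x_{r+1},\dots,x_n$. The key observation is that for $i\in[r]$ and $k>r$, the square-free degree-$2$ monomial $x_i x_k$ lies in $I$ but not in $S'$; since the $b_j$'s (which exhaust the square-free degree-$2$ monomials of $I\setminus J$) are assumed to lie in $S'$, we must have $x_i x_k\in J$. As every monomial of $I=(x_1,\dots,x_r)S$ is a multiple of some $x_i$ with $i\leq r$, this yields $(x_{r+1},\dots,x_n)\cdot I\subseteq J$, so $I/J$ is naturally an $S'$-module.

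Setting $\mathfrak{m}':=(x_1,\dots,x_r)S'$ and $J':=J\cap S'$, I would next check that $I/J\cong \mathfrak{m}'/J'$ as $S'$-modules: by the annihilation, every monomial of $I\setminus J$ lies in $S'$; and a monomial $u\in S'$ belongs to $J$ iff some generator of $J$ divides $u$, in which case the generator is itself in $S'$ and hence lies in $J'$. The standard change-of-rings argument---using that $x_{r+1},\dots,x_n$ is an $S$-regular sequence annihilating $I/J$, which gives $\pd_S(I/J)=\pd_{S'}(I/J)+(n-r)$, together with Auslander--Buchsbaum---then yields
\[
  \depth_S(I/J)\;=\;\depth_{S'}(\mathfrak{m}'/J').
\]

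Finally I would compute $\depth_{S'}(\mathfrak{m}'/J')=1$ from the short exact sequence
\[
  0\to \mathfrak{m}'/J'\to S'/J'\to K\to 0.
\]
Since $J$ (hence $J'$) is generated in degrees $\geq 2$, no variable $x_i$ lies in $J'$, so $\mathfrak{m}'$ is not an associated prime of the Stanley--Reisner ring $S'/J'$; this gives $\depth_{S'}(S'/J')\geq 1 > 0 = \depth_{S'}(K)$. The Depth Lemma (in the form: for $0\to A\to B\to C\to 0$ with $\depth B>\depth C$ one has $\depth A=\depth C+1$) then yields $\depth_{S'}(\mathfrak{m}'/J')=1$, as required.

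The only substantive step is the annihilation $(x_{r+1},\dots,x_n)(I/J)=0$, which is exactly what the hypothesis on the $b_j$'s is tailored to supply. After that, the proof is routine: identify the module, change rings, and invoke the Depth Lemma. In contrast with the earlier lemmas of this section, no explicit Koszul cycle needs to be exhibited.
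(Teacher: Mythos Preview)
Your proof is correct and follows essentially the same route as the paper: both arguments first deduce $(x_{r+1},\dots,x_n)I\subset J$ from the hypothesis on the $b_j$, reduce to $S'$, and then apply the Depth Lemma to the exact sequence $0\to \mathfrak m'/J'\to S'/J'\to S'/\mathfrak m'\to 0$. Your justification of the change-of-rings step (via projective dimension and Auslander--Buchsbaum) and of $\depth_{S'}(S'/J')\ge 1$ (via associated primes) is a bit more explicit than the paper's, but the structure of the argument is identical.
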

\begin{proof} Set $I'=I\cap S'$ and $J'=J\cap S'$. Then $\depth_{S'}S'/I'=0$ and  $\depth_{S'}S'/J'>0$  by Lemma \ref{d}. From the following exact sequence
$$0\to I'/J'\to S'/J'\to S'/I'\to 0$$
it follows that $\depth_{S'}I'/J'=1$ by the Depth Lemma. If $r<n$ then note that $(x_{r+1},\ldots,x_n)I\subset J$ and so
$\depth_S I/J=\depth_S (I'S/J'S) -(n-r)=\depth_{S'} I'/J'=1$.
\end{proof}

\begin{Proposition}\label{3}   Suppose that $d=1$ and $r>s$.  Then $\depth_S I/J=1$.
\end{Proposition}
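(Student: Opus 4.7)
The plan is to reduce Proposition~\ref{3} to an application of Lemma~\ref{use} via Lemma~\ref{s+1}, by choosing a well-adapted subset of the generators $f_i$. Using Lemmas~\ref{r} and~\ref{r'} to normalize $I$ (resp.\ $J$) to be generated in degree exactly $1$ (resp.\ $2$), and renumbering the variables, I may assume $f_i = x_i$ for $i \in [r]$.

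Encode the $b_j$'s as a graph: let $V = [r] \cup \{l > r : x_l \mid b_j \text{ for some } j\}$ and let $G$ be the graph on $V$ whose edges are the pairs $\{i,k\}$ with $b_j = x_i x_k$; set $m := |V| - r$. The key combinatorial observation is that $G$ has a connected component $C$ contained in $[r]$. Indeed, any graph has at least $|V| - |E|$ connected components, so $G$ has at least $r + m - s$; a component containing some vertex $l > r$ must use at least one of the $m$ ``outside'' vertices, so at most $m$ components are such ``external'' ones; hence at least $(r + m - s) - m = r - s \geq 1$ components lie entirely in $[r]$. This is precisely where the hypothesis $r > s$ enters.

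After reordering the $f_i$ so that $C = \{1, \ldots, e\}$, set $I' = (x_1, \ldots, x_e)$ and $J' = J \cap I'$. Because $C$ is a full connected component of $G$, no edge of $G$ crosses between $C$ and $V \setminus C$; this gives two things: (i) every square-free monomial of degree $2$ in $I' \setminus J'$ has both variables in $\{x_1, \ldots, x_e\}$, hence lies in $K[x_1, \ldots, x_e]$; and (ii) for every $l \notin C$ and every $i \in C$ the product $x_i x_l$ cannot be a $b_j$ (it would be an edge leaving $C$), so $x_l \, I' \subset J'$. These are exactly the hypotheses of Lemma~\ref{use} applied to $(I', J')$ with $e$ in the role of $r$, yielding $\depth_S I'/J' = 1$. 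Lemma~\ref{s+1}, applied with this $e$, then gives $\depth_S I/J = 1$. The main obstacle in this plan is spotting the graph-theoretic bound on the number of internal components of $G$ — which is exactly where the hypothesis $r > s$ is used — since once it is in place, the remainder is a routine verification of the hypotheses of Lemmas~\ref{use} and~\ref{s+1}.
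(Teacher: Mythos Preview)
Your proof is correct, and it takes a genuinely different route from the paper's.

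The paper argues by induction on $s$, with the cases $s\leq 2$ handled separately by Corollary~\ref{1} and Proposition~\ref{2}. In the inductive step (after first invoking Lemma~\ref{eq} to dispose of generators with no multiple outside $J$), one checks whether every $b_k$ has both of its variable factors among $x_1,\dots,x_r$: if so, Lemma~\ref{use} applies directly to $(I,J)$; if not, say $b_s=x_rx_l$ with $l>r$, one passes to $I'=(x_1,\dots,x_{r-1})$, observes that the new parameters satisfy $r-1>s-1$ with strictly smaller $s$, and invokes the induction hypothesis together with Lemma~\ref{s+1}.

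Your argument replaces this induction by a single combinatorial step: the inequality $c\geq |V|-|E|$ for the number of components of a graph, combined with the pigeonhole bound of at most $m$ ``external'' components, produces in one stroke a component $C\subset [r]$ to which Lemma~\ref{use} applies. This is cleaner in that it needs neither the base cases (Corollary~\ref{1}, Proposition~\ref{2}) nor the preliminary reduction via Lemma~\ref{eq}; the isolated-vertex case of your argument absorbs Lemma~\ref{eq} as a degenerate instance. The paper's approach, on the other hand, is more elementary in that it avoids any graph-theoretic language and proceeds by a transparent peeling-off of generators. Both routes ultimately rest on the same two lemmas, \ref{use} and~\ref{s+1}.
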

\begin{proof} By Lemma \ref{use} we may suppose that $I=(x_1,\ldots,x_r)$ with $r<n$. Using Lemma \ref{eq} we may suppose that each $x_i$, $i\in [r]$ divides a certain $b_k$. Apply induction on $s$, the case $s\leq 2$ being done in Proposition \ref{2}. Assume that $s>2$.
We may suppose that each $b_k$ is a product of two different $x_i$, $i\in [r]$ because if let us say $b_s$ is just a multiple of one $x_i$, $i\in [r]$, for example $x_r$,  then we may take $I'=(x_1,\ldots, x_{r-1})$, $J'=J\cap I'$ and we get $\depth_SI'/J'=1$ by induction hypothesis on $s$ since $r-1>s-1$, that is $\depth_S I/J=1$ by Lemma \ref{s+1}. But if  each $b_k$ is a product of two different $x_i$, $i\in [r]$ we see that $b_j\in S'=K[x_1,\ldots,x_r]$ for all $j\in [s]$ and we may apply again Lemma \ref{use}.
\end{proof}

\section{Main result}

We want to extend Proposition \ref{3} for the case $d>1$. Next example is an  illustration of our method.

\begin{Example}\label{e2} {\em Let $n=6$, $d=2$, $f_1=x_1x_6$, $f_2=x_1x_5$, $f_3=x_1x_3$, $f_4=x_3x_4$, \\
$f_5=x_2x_4$,   $$J=(x_1x_2x_4,x_1x_2x_5, x_1x_2x_3,x_1x_2x_6, x_1x_3x_6,x_1x_4x_5, x_1x_4x_6,$$
 $$x_2x_4x_5,x_2x_4x_6,x_3x_4x_5,x_3x_4x_6)$$ and $I=(f_1,f_2,f_3,f_4,f_5)$. We have $s=4$, $b_1=x_5f_1=x_6f_2$, $b_2=x_3f_2=x_5f_3$, $b_3=x_4f_3=x_1f_4$,
 $b_4=x_2f_4=x_3f_5$.
 Let $\partial_i:K_i(x;I/J)\to K_{i-1}(x;I/J)$,  $K_i(x;I/J)\cong S^{{6\choose i}}$, $i\in [6]$ be the Koszul derivation given by
 $$\partial_i(e_{j_1}\wedge \ldots \wedge e_{j_i})=\sum_{k=1}^i (-1)^{k+1}x_{j_k} e_{j_1}\wedge \ldots \wedge e_{j_{k-1}}\wedge e_{j_{k+1}}\wedge\ldots \wedge e_{j_i}.$$
 We consider the following elements of $K_4(x;I/J)$
  $$e_{\sigma_1}=e_2\wedge \ldots \wedge e_5, \ e_{\sigma_2}=e_2\wedge \ldots \wedge e_4\wedge e_6, \ e_{\sigma_3}=e_2\wedge e_4\wedge \ldots \wedge e_6,$$  $$e_{\sigma_4}=e_1\wedge e_2 \wedge e_5\wedge e_6,\ e_{\sigma_5}=e_1\wedge e_3 \wedge e_5\wedge e_6.$$
   Then the element $$z=f_1e_{\sigma_1}-f_2e_{\sigma_2}-f_3e_{\sigma_3}-f_4e_{\sigma_4}+f_5e_{\sigma_5}$$ satisfies
 $$\partial_4(z)=(-b_1+b_1))e_2\wedge e_3\wedge e_4+ (b_2-b_2) e_2\wedge e_4\wedge e_6+$$
 $$(b_3-b_3) e_2\wedge e_5\wedge e_6+ (b_4-b_4)e_1\wedge e_5\wedge e_6=0,$$ since $(b_k)$ are the only monomials of degree $3$ which are not in $J$. Note that in a term $ue_{\sigma}$ of an element from $\Im \partial_5$ we have $u$ of degree $\geq 3$ because $I$ is generated in degree $2$. Thus $z\not \in \Im \partial_5$ induces a nonzero element in $H_4(x;I/J)$. By \cite[Theorem 1.6.17]{BH} we get $\depth_S I/J\leq 2$, which is enough.
}
\end{Example}

\begin{Theorem}\label{main} If $r>s$ then $\depth_S I/J=d$, independently of the characteristic of $K$.
\end{Theorem}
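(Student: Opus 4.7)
The plan is to establish the upper bound $\depth_S I/J\leq d$, since the reverse inequality is Lemma~\ref{d}. First I would use Lemmas~\ref{r} and~\ref{r'} to reduce to the case where $I$ is generated in degree exactly $d$ and $J$ in degree exactly $d+1$; then the $f_i$ have degree $d$ and the $b_k$ are precisely the square free monomials of $I\setminus J$ of degree $d+1$. Following the pattern of Example~\ref{e2} and Remarks~\ref{rem1},~\ref{rem2}, I would produce a non-zero class in the Koszul homology module $H_{n-d}(x;I/J)$ and conclude $\depth_S I/J\leq d$ from \cite[Theorem 1.6.17]{BH}.

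The candidate Koszul cycle is
\[
z=\sum_{i=1}^{r} c_i\, f_i\, e_{\sigma_i}\in K_{n-d}(x;I/J),
\]
where $\sigma_i=[n]\setminus\supp(f_i)$ has size $n-d$ and $c_i\in K$. Computing $\partial_{n-d}(z)$, each summand $\partial(f_i e_{\sigma_i})$ contributes terms of the form $\pm x_j f_i\, e_{\sigma_i\setminus\{j\}}$ with $j\in\sigma_i$; the monomial $x_j f_i$ is a square free multiple of $f_i$ of degree $d+1$, so it vanishes in $I/J$ unless it equals one of $b_1,\dots,b_s$. Grouping the surviving terms according to the $b_k$ they produce, the condition $\partial_{n-d}(z)=0$ translates into a homogeneous $K$-linear system of at most $s$ equations in the $r$ unknowns $c_1,\dots,c_r$, one equation per $b_k$ of the form $\sum_{i\,:\,f_i\mid b_k}\varepsilon_{k,i}\,c_i=0$ for suitable signs $\varepsilon_{k,i}\in\{\pm 1\}$. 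Since $r>s$, the solution space has dimension at least $r-s\geq 1$, and any nontrivial solution yields $z\neq 0$ in $K_{n-d}(x;I/J)$ (the basis elements $e_{\sigma_i}$ are distinct because the $f_i$ are distinct square free monomials of the same degree).

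It then remains to check that $z$ is not a boundary. The element $z$ is homogeneous of internal degree $n=d+(n-d)$, while any element of $K_{n-d+1}(x;I/J)$ of this degree must have coefficients of degree $d-1$ in $I/J$; this graded piece vanishes because $I$ is generated in degree $d$ (and $J\subset I$), so $I/J$ has no nonzero elements of degree below $d$. Hence $z$ represents a nonzero class in $H_{n-d}(x;I/J)$, as required. The main technical point in carrying out this plan is the sign bookkeeping in the Koszul differential when several $f_i$ with $f_i\mid b_k$ contribute to the same equation; however, since the existence of a nonzero solution only depends on having more unknowns than equations, the signs play no role in the final argument and the hypothesis $r>s$ alone suffices, which is also what makes the conclusion independent of $\chara K$.
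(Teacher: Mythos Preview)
Your proposal is correct and follows essentially the same route as the paper: construct $z=\sum_i c_i f_i e_{\sigma_i}$, reduce $\partial_{n-d}(z)=0$ to an underdetermined homogeneous linear system with $s$ equations in $r$ unknowns, and use the degree argument (after the reduction via Lemmas~\ref{r},~\ref{r'}) to see that $z$ cannot be a boundary. Your write-up is in fact slightly more explicit than the paper's, in that you spell out why $z\neq 0$ in $K_{n-d}(x;I/J)$ and why $(I/J)_{d-1}=0$ forces the relevant graded piece of $K_{n-d+1}$ to vanish.
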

\begin{proof}  Let $\supp f_i=\{j\in [n]: x_j|f_i\}$,\ $e_{\sigma_i}=\wedge_{j\in ([n]\setminus \supp f_i)}\ e_j$ and  \\ $e_{\tau_k}=\wedge_{j\in ([n]\setminus \supp b_k)}\ e_j$. By \cite[Theorem 1.6.17]{BH} it is enough to show, as in the above example, that there exist $y_i\in K$, $i\in [r]$ such that $z=\sum_{i=1}^ry_if_i e_{\sigma_i}$ induces a nonzero element of  $H_{n-d}(x;I/J)$.
Let $\partial_i$ be the Koszul derivation as above. Then $$\partial_{n-d}(z)=\sum_{k=1}^s(\sum_{i\in [r], f_i|b_k} \epsilon_{ki}y_i)b_k $$
 for some  $ \epsilon_{ki}\in \{1,-1\}$. Thus $\partial_{n-d}(z)=0$ if and only if $\sum_{i\in [r], f_i|b_k} \epsilon_{ki}y_i=0$ for all $k\in [s]$. This is a system of  $s$ homogeneous linear equations in $r$ variables $Y$, which must have a nonzero solution in $K$ because $r>s$. As in the above example $z\not\in \Im \partial_{n-d+1}$ if $I$ is generated in degree $ d$ (this may be supposed by Lemmas \ref{r}, \ref{r'}).
\end{proof}

The condition given in Theorem    \ref{main} is tight as shows the following two examples.
\begin{Example} \label{no}{\em Let $n=4$, $d=2$, $f_1=x_1x_3$, $f_2=x_2x_4$, $f_3=x_1x_4$ and $I=(f_1,\ldots,f_3)$, $J=(x_2x_3x_4)$ be ideals of $S$. We have $r=s=3$, $b_1=x_1x_2x_3$, $b_2=x_1x_2x_4$, $b_3=x_1x_3x_4$, and $\depth_SI/J=d+1$.}
\end{Example}

\begin{Example}\label{gen} {\em Let $n=6$, $d=2$, $f_1=x_1x_5$, $f_2=x_2x_3$, $f_3=x_3x_4$, $f_4=x_1x_6$, $f_5=x_1x_4$, $f_6=x_1x_2$, and $I=(f_1,\ldots,f_6)$, $$J=(x_1x_2x_4,x_1x_2x_5,x_1x_3x_5,x_1x_3x_6, x_1x_4x_6, x_2x_3x_5,x_2x_3x_6, x_3x_4x_5,x_3x_4x_6).$$  We have $r=s=6$ and  $b_1=x_1x_4x_5$, $b_2=x_2x_3x_4$, $b_3=x_1x_2x_3$, $b_4=x_1x_5x_6$, $b_5=x_1x_3x_4$, $b_6=x_1x_2x_6$ but $\depth_SI/J=2$ (although $d=2$).}
\end{Example}
\begin{Remark}\label{new}{\em The above example \ref{gen} shows that one could  find a nice class of factors of square free monomial ideals with $r=s$ but $\depth_SI/J=d$ similarly as in \cite[Lemma 6]{P3}. An important tool seems to be a classification of the possible posets given on $f_1,\ldots,f_r,b_1,\ldots,b_s$
 by the divisibility.}
\end{Remark}
\begin{Remark}\label{st1} {\em Given $J\subsetneq I$ two square free monomial ideals of $S$  as above one can consider the poset $P_{I\setminus J}$  of all square free monomials of $I\setminus J$ (a finite set) with the order given by the divisibility. Let ${\mathcal P}$ be a partition of  ${\mathcal P}\ \ P_{I\setminus J}$ in intervals $[u,v]=\{w\in  P_{I\setminus J}: u|w, w|v\}$, let us say   $P_{I\setminus J}=\cup_i [u_i,v_i]$, the union being disjoint.
Define $\sdepth {\mathcal P}=\min_i\deg v_i$ and $\sdepth_SI/J=\max_{\mathcal P} \sdepth {\mathcal P}$, where ${\mathcal P}$ runs in the set of all partitions of $P_{I\setminus J}$. This is the Stanley depth of $I/J$, in fact this is an equivalent definition (see  \cite{S}, \cite{HVZ}).
If $r>s$ then it is obvious that $\sdepth_S I/J=d$ and so
 Theorem \ref{main} says that  Stanley's Conjecture holds, that is $\sdepth_S I/J\geq \depth_S I/J$. ´In general the Stanley depth of a monomial ideal $I$ is greater than or equal with the Lyubeznik' size of $I$ increased by one (see \cite{HPV}).  Stanley's Conjecture holds for intersections of four monomial prime ideals  of $S$ by \cite{AP} and \cite{P1} and for square free monomial ideals of $K[x_1,\ldots,x_5]$ by \cite{P} (a short exposition on this subject  is given in \cite{P2}).
 Also Stanley's Conjecture holds for intersections of three monomial primary ideals by \cite{Z}. In the case of a non square free monomial ideal $I$ a useful inequality is $\sdepth I\leq \sdepth \sqrt{I}$ (see \cite[Theorem 2.1]{Is}).
  }
\end{Remark}

\vskip 0.4 cm

\section{Around Theorem \ref{main}}

Let $S' = K[x_1, \ldots , x_{n-1}]$ be a polynomial ring in $n-1$ variables over a field $K$, $S = S'[x_n]$ and $U, V \subset S'$,
$V\subset U$ be two square free monomial ideals. Set $W = (V +x_nU )S$. Actually,
every monomial square free ideal $T$ of $S$ has this form because then $(T : x_n)$ is generated by an ideal
$U \subset S'$ and $T = (V + x_nU )S$ for $V = T \cap S'$.
\begin{Lemma} (\cite{P}) Suppose that $U \not = V$ and  $\depth_{S'} S'/U = \depth_{S'} S'/V = \depth_{S'} U/V$. Then
$\depth_S S/W = \depth_{S'} S'/U$.
\end{Lemma}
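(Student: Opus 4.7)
The plan is to produce a short exact sequence of $S$-modules
\[
0 \to U/V \to S/W \to S/US \to 0
\]
and then read off the depth of $S/W$ from the Depth Lemma.

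Since $V \subset U$, the ideal $W = (V + x_n U)S = VS + x_n US$ is contained in $US$, so there is a natural surjection $S/W \twoheadrightarrow S/US$ with kernel $US/W$. I would then identify $US/W \cong U/V$ as $S$-modules. Decomposing by $x_n$-degree over $S'$, one has $US = \bigoplus_{i \geq 0} U x_n^i$ and $W = V \oplus \bigoplus_{i \geq 1} U x_n^i$, so the quotient is just the degree-zero piece $U/V$. The $S$-module structure is compatible because $x_n \cdot US \subset x_n US \subset W$, so $x_n$ annihilates $US/W$, matching its (trivial) action on $U/V$.

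Let $e$ denote the common value of $\depth_{S'}(S'/U) = \depth_{S'}(S'/V) = \depth_{S'}(U/V)$. Since $x_n$ kills $U/V$, we have $\depth_S(U/V) = \depth_{S'}(U/V) = e$; and since $S/US \cong (S'/U) \otimes_{S'} S$ is obtained by polynomial-ring extension, $\depth_S(S/US) = \depth_{S'}(S'/U) + 1 = e+1$. Applied to the sequence above, the Depth Lemma yields $\depth_S(S/W) \geq \min(e, e+1) = e$. Moreover, because the submodule has depth strictly less than the quotient, the standard consequence of the Depth Lemma forces $\depth_S(S/W) \leq \depth_S(U/V) = e$, giving equality, which is the claim.

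The whole argument boils down to spotting the short exact sequence; the only slightly delicate point is the identification $US/W \cong U/V$, which is immediate from the $x_n$-graded decomposition once it is written out. No tool beyond the Depth Lemma and the polynomial-extension formula for depth is required, and the hypothesis $U \neq V$ is exactly what is needed to make $U/V$ a nonzero module with a well-defined depth equal to $e$.
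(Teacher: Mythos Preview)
Your proof is correct. The paper does not supply its own argument for this lemma; it simply quotes the result from \cite{P}. Your short exact sequence
\[
0 \longrightarrow US/W \longrightarrow S/W \longrightarrow S/US \longrightarrow 0
\]
together with the identification $US/W \cong U/V$ (via the $x_n$-graded decomposition $US=\bigoplus_{i\ge 0}Ux_n^i$ and $W=V\oplus\bigoplus_{i\ge 1}Ux_n^i$) is exactly the right device, and the Depth Lemma finishes the computation as you say: the kernel has $S$-depth $e$, the cokernel has $S$-depth $e+1$, hence $\depth_S S/W=e$.

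Two minor remarks. First, your argument actually uses only the equalities $\depth_{S'}U/V=\depth_{S'}S'/U=e$; the hypothesis $\depth_{S'}S'/V=e$ is never invoked, so you have proved a slightly stronger statement than the one recorded. Second, the paper does use a related exact sequence $0\to VS\to W\to US/VS\to 0$ in the proof of the next lemma (the converse direction), so your choice of sequence is entirely in the spirit of the surrounding arguments.
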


\begin{Lemma} \label{bul} Suppose that $U \not = V$ and  $d:=\depth_{S'} S'/U = \depth_{S'} S'/V$. Then $d= \depth_{S'} U/V$ if and only if $d=\depth_S S/W $.
\end{Lemma}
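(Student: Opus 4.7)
The lemma is an iff. The implication $\depth_{S'} U/V = d \Rightarrow \depth_S S/W = d$ follows at once from the preceding lemma, whose hypothesis (three depths all equal) is then satisfied with common value $d$. So the real work lies in the converse, $\depth_S S/W = d \Rightarrow \depth_{S'} U/V = d$, which is what I plan to establish.

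The strategy is to pin $\depth_{S'} U/V$ between two bounds, both equal to $d$. The lower bound $\depth_{S'} U/V \geq d$ comes from applying the Depth Lemma to the short exact sequence $0 \to U/V \to S'/V \to S'/U \to 0$ in $S'$, both of whose outer terms have depth $d$. For the upper bound I construct the short exact sequence of $S$-modules
$$0 \to (U/V)[x_n] \xrightarrow{\ x_n\ } (S'/V)[x_n] \to S/W \to 0,$$
where the middle term is $S/VS$ and the map is multiplication by $x_n$ (after the natural inclusion $U/V \hookrightarrow S'/V$). The verification rests on the monomial identity $x_n US \cap VS = x_n VS$ (valid because $V \subset U$ are generated by square free monomials), which gives $W/VS = (x_n US + VS)/VS \cong x_n US/x_n VS \cong US/VS = (U/V)[x_n]$.

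With the sequence in hand, apply the Depth Lemma. Using the standard identity $\depth_S M[x_n] = \depth_{S'} M + 1$ for any $S'$-module $M$, we have $\depth_S (S'/V)[x_n] = d+1$ and $\depth_S (U/V)[x_n] = \depth_{S'} U/V + 1$. The inequality $\depth_S S/W \geq \min\bigl(\depth_S (S'/V)[x_n],\ \depth_S (U/V)[x_n] - 1\bigr)$ becomes $d \geq \min(d+1,\ \depth_{S'} U/V)$; since $d < d+1$, this forces $\depth_{S'} U/V \leq d$. Combined with the lower bound, $\depth_{S'} U/V = d$. The main technical hurdle is identifying the kernel of $(S'/V)[x_n] \to S/W$ as $(U/V)[x_n]$; once that identification is in place, the Depth Lemma does the rest.
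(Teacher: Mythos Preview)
Your proof is correct and close in spirit to the paper's. Both arguments hinge on the identification $W/VS \cong US/VS = (U/V)[x_n]$; your verification that $x_n US \cap VS = x_n VS$ is precisely what makes this work. The paper, however, embeds this module in the sequence $0 \to VS \to W \to US/VS \to 0$: from $\depth_{S'} S'/V = d$ and $\depth_S S/W = d$ one gets $\depth_S VS = d+2$ and $\depth_S W = d+1$ (via the standard relation $\depth_R I = \depth_R R/I + 1$ for a nonzero ideal), and then the Depth Lemma pins $\depth_S US/VS$ to $d+1$ in a single stroke, giving both bounds at once. You instead use the sequence $0 \to (U/V)[x_n] \xrightarrow{x_n} S/VS \to S/W \to 0$ for the upper bound and a separate sequence $0 \to U/V \to S'/V \to S'/U \to 0$ over $S'$ for the lower bound. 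Your route takes two sequences rather than one, but has the mild advantage of working directly with the quotients $S/W$ and $S/VS$ and never invoking the $\depth I$ versus $\depth R/I$ relation.
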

\begin{proof} The necessity follows from the above lemma. For sufficiency note that in the  exact sequence
$$0\to VS\to W\to US/VS\to 0$$ the depth of the left end is $d+2$  and the middle term has depth $d+1$. It follows that $\depth_S US/VS=d+1$ by the Depth Lemma, which is enough.
\end{proof}

Let $I$ be an ideal of $S$ generated by  square free monomials of degree $\geq d$ and $x_nf_1,\ldots,x_nf_r$, $r>0$ be the  square free monomials of
$I\cap (x_n)$ of degree $d$. Set $U=(f_1,\ldots,f_r)$, $V=I\cap S'$.

\begin{Theorem}\label{main1} If $r>\rho_d(U)-\rho_d(U\cap V)$ then $\depth_S S/I=\depth_{S'}(U+V)/V=d-1$.
\end{Theorem}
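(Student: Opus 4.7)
The plan is to apply Theorem \ref{main} inside the subring $S'$ to establish the middle equality $\depth_{S'}(U+V)/V = d-1$, and then to transfer this statement from $S'$ to $S$ using the exact sequence arising from multiplication by $x_n$ together with Lemma \ref{bul}. For the first step, I view $U+V \supsetneq V$ as square-free monomial ideals of $S'$: its minimal generators of degree $d-1$ are exactly $f_1,\ldots,f_r$ (since $V$ has no monomial of degree $<d$), and $V$ is generated in degrees $\geq d$. A quick identity shows $\rho_d(U+V)-\rho_d(V) = \rho_d(U)-\rho_d(U\cap V)$: for monomial ideals a square-free degree-$d$ monomial $m$ is in $(U+V)\setminus V$ iff $m\in U\setminus(U\cap V)$, because $m\in U+V$ iff $m\in U$ or $m\in V$. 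Our hypothesis $r>\rho_d(U)-\rho_d(U\cap V)$ therefore becomes exactly the hypothesis of Theorem \ref{main} (with $d-1$ in the role of its $d$), and the first equality follows.

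Second, I would bootstrap to the larger ideal $\tilde U := (I:x_n)\cap S'$, the canonical companion to $V$ in the decomposition $I = (V+x_n\tilde U)S$. One writes $\tilde U = U+V+H$, where $H$ is the ideal generated by the extra monomials $h_j$ obtained when dividing a degree-$>d$ generator $x_n h_j$ of $I$ by $x_n$. Minimality of these generators of $I$ forces each $h_j$ to lie outside $U\cup V$, and a degree argument shows $H\cap(U+V)$ is generated in degrees $\geq d+1$: any common multiple of some $h_j$ (of degree $\geq d$) and a generator $g$ of $U+V$ is divisible by $\lcm(h_j,g)$, which strictly exceeds $\deg h_j$ since $g \nmid h_j$. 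Hence Lemma \ref{d}, applied via the isomorphism $\tilde U/(U+V) \cong H/(H\cap(U+V))$, gives $\depth_{S'}\tilde U/(U+V)\geq d$, and the Depth Lemma on $0\to(U+V)/V\to \tilde U/V\to \tilde U/(U+V)\to 0$ pins $\depth_{S'}\tilde U/V$ to exactly $d-1$.

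Third, to conclude $\depth_S S/I = d-1$, I use the short exact sequence of $S$-modules $0\to (S'/\tilde U)[x_n]\to S/I\to S'/V\to 0$ coming from $0\to S/(I:x_n)\xrightarrow{\cdot x_n} S/I\to S/(I,x_n)\to 0$, using $(I:x_n)=\tilde U S$ and $(I,x_n)=VS+(x_n)$. Identifying $\depth_S(S'/\tilde U)[x_n]=\depth_{S'}\tilde U$ and $\depth_S S'/V=\depth_{S'}V-1$, the Depth Lemma gives $\depth_S S/I\geq d-1$ immediately, from $\depth_{S'}\tilde U\geq d-1$ and $\depth_{S'}V\geq d$. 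The matching upper bound $\depth_S S/I\leq d-1$ follows from the Depth Lemma in all but one knife-edge case: when $\depth_{S'}V = d$ and $\depth_{S'}\tilde U = d$, so that $\depth_{S'}S'/V = \depth_{S'}S'/\tilde U = d-1$, Lemma \ref{bul} applied with $\tilde U$ in the role of its ``$U$'' turns the equality $\depth_{S'}\tilde U/V = d-1$ of step~2 into $\depth_S S/I = d-1$. The main obstacle is precisely this knife-edge case, which is what Lemma \ref{bul} is designed to handle.
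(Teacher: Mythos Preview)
Your proof is correct and follows essentially the same route as the paper: apply Theorem~\ref{main} in $S'$ to get $\depth_{S'}(U+V)/V=d-1$, pass to $\tilde U=(I:x_n)\cap S'$ (the paper does this by citing Lemmas~\ref{r},~\ref{r'}, you do it by a direct Depth Lemma argument on $0\to(U+V)/V\to\tilde U/V\to\tilde U/(U+V)\to 0$), and then run a case analysis on the $x_n$-multiplication sequence with Lemma~\ref{bul} handling the case $\depth_{S'}S'/\tilde U=\depth_{S'}S'/V=d-1$. Your Step~3 is a bit terse about the remaining non-knife-edge case $\depth_{S'}\tilde U\geq d+1$: there one must first feed $\depth_{S'}\tilde U/V=d-1$ into the $S'$-sequence $0\to\tilde U/V\to S'/V\to S'/\tilde U\to 0$ to force $\depth_{S'}S'/V=d-1$ before the Depth Lemma on the $S$-sequence yields the upper bound --- exactly what the paper does in its third case --- but all the needed ingredients are already in your write-up.
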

\begin{proof} By Theorem \ref{main} we have $\depth_{S'}(U+V)/V=\depth_{S'} U/(U\cap V)=d-1$. Using Lemmas \ref{r}, \ref{r'} we get
$$\depth_{S'} (U+V)/V=\depth_{S'} ((I:x_n)\cap S')/(I\cap S')=d-1.$$
If $\depth_{S'}S'/(I\cap S')=\depth_{S'} S'/((I:x_n)\cap S')=d-1$ then $\depth_S S/I=d-1$ by Lemma \ref{bul}. If $\depth_{S'} S'/((I:x_n)\cap S')=d-2$ then   in the exact sequence
$$0\to S/(I:x_n)\xrightarrow{x_n} S/I\to S'/(I\cap S')\to 0$$
the first term has depth $d-1$ and the other two have depth $\geq d-1$ by Lemma \ref{d}. By the Depth Lemma it follows that $\depth_SS/I=d-1$.

It remains to consider the case when at least one of $\depth_{S'} S'/((I:x_n)\cap S')$ and $\depth_{S'}S'/(I\cap S')$ is $\geq d$. Using the Depth Lemma in the exact sequence
$$0\to ((I:x_n)\cap S')/(I\cap S')\to S'/(I\cap S')\to S'/((I:x_n)\cap S')\to 0$$
we see that necessarily the depth of the last term is $\geq d$ and the depth of the middle term is $d-1$. But then the Depth Lemma applied to the previous exact sequence gives $\depth_SS/I=d-1$ too.
\end{proof}

The following corollary extends \cite[Corollary 3]{P3}.
\begin{Corollary} \label{str} Let $I$ be an ideal generated by $\mu(I)>1$ square free monomials of degree $d$. If $\mu(I)\geq \rho_{d+1}(I)$, in particular if
 $\mu(I)\geq {n\choose d+1}$, then $\depth_SI= d$.
\end{Corollary}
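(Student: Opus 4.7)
My plan is to deduce Corollary \ref{str} from Theorem \ref{main} by choosing a suitable principal subideal $J\subset I$. First I would observe that $\mu(I)>1$ forces $d<n$ (otherwise $I$ would be generated by the single monomial $x_1\cdots x_n$). Hence for any generator $g$ of $I$ one can pick a variable $x_k\notin\supp g$, so that $b:=x_kg$ is a square free monomial of degree $d+1$ lying in $I$; in particular $\rho_{d+1}(I)\geq 1$, and the candidate $J:=(b)$ is a nonzero proper subideal of $I$ generated by a square free monomial of degree $d+1$.

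Next I would apply Theorem \ref{main} to the pair $(I,J)$. Here $r=\mu(I)$, and since the only square free monomial of degree $d+1$ that is a multiple of $b$ is $b$ itself, $\rho_{d+1}(J)=1$ and so $s=\rho_{d+1}(I)-1$. The assumption $\mu(I)\geq \rho_{d+1}(I)$ is therefore exactly the strict inequality $r>s$, and Theorem \ref{main} yields $\depth_S I/J=d$. The ``in particular'' clause then follows at once from the trivial bound $\rho_{d+1}(I)\leq \binom{n}{d+1}$.

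Finally I would promote $\depth_S I/J=d$ to $\depth_S I=d$ via the exact sequence $0\to J\to I\to I/J\to 0$. Because $J\cong S$ is free of rank one, $\depth_S J=n$, and the Depth Lemma gives $d=\depth_S I/J\geq\min(\depth_S I,\,n-1)$. The one step that requires a little care is the upper bound $\depth_S I\leq n-1$: this holds because $\mu(I)\geq 2$ prevents $I$ from being principal, hence free over the integral domain $S$, so $\pd_S I\geq 1$ and Auslander--Buchsbaum forces $\depth_S I\leq n-1$. Thus $\min(\depth_S I,n-1)=\depth_S I$, giving $\depth_S I\leq d$, which combined with Lemma \ref{d} yields the desired equality $\depth_S I=d$.
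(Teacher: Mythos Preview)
Your proof is correct, and it takes a genuinely different route from the paper's. The paper does not introduce an auxiliary principal ideal $J$; instead it decomposes $I$ with respect to a variable, writing $I=(V+x_nU)S$ with $U,V\subset S'=K[x_1,\dots,x_{n-1}]$, checks the numerical inequality $r>\rho_d(U)-\rho_d(U\cap V)$, and then invokes Theorem~\ref{main1} (whose proof applies Theorem~\ref{main} to $U/(U\cap V)$ in $S'$ and runs several Depth Lemma arguments on the exact sequences relating $S/I$, $S/(I:x_n)$, and $S'/(I\cap S')$). Your argument bypasses Theorem~\ref{main1} entirely: by taking $J=(b)$ principal you can apply Theorem~\ref{main} directly to $I/J$ in $S$ itself, and the passage from $\depth_S I/J=d$ to $\depth_S I=d$ becomes a single Depth Lemma step exploiting $\depth_S J=n$ and $\depth_S I\le n-1$. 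This is shorter and more transparent; the paper's route, on the other hand, yields Theorem~\ref{main1} as an independent statement about $\depth_S S/I$ in terms of the $U,V$ decomposition, which has its own interest.
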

\begin{proof} We have $I=(V+x_n(U+V))S$ as above. Renumbering the variables we may suppose that $U,V\not =0$.  Note that $\mu(I)=r+\rho_d(V)$ and $\rho_{d+1}(I)=\rho_{d+1}(V)+\rho_d(U+V)>\rho_d(V)+\rho_d(U)-\rho_d(U\cap V)$. By hypothesis, $\mu(I)\geq \rho_{d+1}(I)$ and so $r>\rho_d(U)-\rho_d(U\cap V)$. Applying Theorem \ref{main1} we get $\depth_S S/I=d-1$, which is enough.
\end{proof}
\begin{Remark} \label{im}{\em Take in  Example \ref{no} $S'=K[x_1,\ldots,x_5]$ and $L=(J+x_5I)S'$. We have $\mu(L)=4<{5\choose 3+1}$, that is the hypothesis of the above corollary
are not fulfilled. This is the reason that $\depth_{S'} L= 4$ by Lemma \ref{bul} since $\depth_SI/J=3$. Thus the condition of the above corollary is tight.}
\end{Remark}

\section{Minimal  Stanley depth}

Let $S=K[x_1,\ldots,x_n]$ be the polynomial algebra in $n$-variables over a field $K$,  $d$  a positive integer   and $J\subsetneq I$, be two  square free monomial ideals of $S$. Let $\rho_d(I)$ be the number of all square free monomials of degree $d$ of $I$. Suppose that $\rho_d(I)>0$ and $I$ is generated in degree $\geq d$. It follows that $\sdepth_S I/J\geq d$.
\begin{Theorem}\label{nice} The following statements are equivalent:
\begin{enumerate}
\item{} $\sdepth_S I/J=d$
 \item{} there exist some square free monomials  of degree $d$ in $I$, which generate an ideal $I'$ such that $\rho_d(I')>\rho_{d+1}(I')-\rho_{d+1}(I'\cap J)$.
     \end{enumerate}
\end{Theorem}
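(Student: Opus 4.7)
The plan is to recast both conditions in terms of a bipartite matching problem and apply Hall's marriage theorem. Let $A$ and $B$ be the sets of squarefree monomials of $I\setminus J$ of degrees $d$ and $d+1$ respectively, and for $M\subseteq A$ set $N(M)=\{b\in B: m\mid b\text{ for some }m\in M\}$. The first step is the dictionary: adopting the natural convention that the chosen generators of $I'$ in~(2) lie in $I\setminus J$, so $I'=(M)$ for some $M\subseteq A$, one checks that $\rho_d(I')=|M|$ and $\rho_{d+1}(I')-\rho_{d+1}(I'\cap J)=|N(M)|$ (a squarefree degree-$(d+1)$ multiple of some $m\in M$ fails to lie in $J$ exactly when it lies in $B$). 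Thus (2) is equivalent to the failure of Hall's marriage condition on the bipartite graph $(A,B)$.

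For the implication (1)$\Rightarrow$(2) I argue by contrapositive. If no such $M$ exists, Hall's condition holds on $(A,B)$ and supplies an injection $\phi\colon A\hookrightarrow B$ with $a\mid\phi(a)$ for every $a\in A$. Using $\phi$ I build a Stanley partition of $P_{I\setminus J}$ consisting of the two-element intervals $[a,\phi(a)]=\{a,\phi(a)\}$ for $a\in A$, the singletons $\{b\}$ for $b\in B\setminus\phi(A)$, and the singletons $\{c\}$ for every squarefree $c\in P_{I\setminus J}$ of degree $\geq d+2$. These pieces are pairwise disjoint, they exhaust $P_{I\setminus J}$, and all upper endpoints have degree $\geq d+1$, so $\sdepth_S I/J\geq d+1$, contradicting~(1).

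For (2)$\Rightarrow$(1) I take $M\subseteq A$ with $|M|>|N(M)|$ and consider an arbitrary partition $\mathcal{P}$ of $P_{I\setminus J}$. Each $m\in M$ has the minimum possible degree in $P_{I\setminus J}$, so it is forced to be the bottom of its own interval $[m,v_m]$. If every $v_m$ had degree $\geq d+1$, then choosing any variable $x_k$ dividing $v_m/m$ would yield $w_m:=m\cdot x_k\in[m,v_m]$, and $w_m\mid v_m\notin J$ would force $w_m\notin J$; so $w_m\in N(M)$ and, since distinct intervals of $\mathcal{P}$ are disjoint, $m\mapsto w_m$ would be an injection $M\hookrightarrow N(M)$, contradicting $|M|>|N(M)|$. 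Hence some $v_m$ has degree exactly $d$, giving $\sdepth\mathcal{P}\leq d$, and equality in~(1) follows together with the a priori inequality $\sdepth_S I/J\geq d$.

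The main obstacle is the dictionary set up in the first paragraph: a careless reading of (2) would allow a generator of $I'$ lying inside $J$, which would inflate $\rho_d(I')$ without changing the right-hand side and so break the Hall correspondence; one therefore has to fix the implicit convention $M\subseteq I\setminus J$ before the argument runs cleanly. Once that is in place, the partition construction in (1)$\Rightarrow$(2) and the injection $m\mapsto w_m$ in (2)$\Rightarrow$(1) are both essentially routine.
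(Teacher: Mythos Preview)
Your proof is correct and follows essentially the same route as the paper: both set up the bipartite graph on the degree-$d$ and degree-$(d+1)$ squarefree monomials of $I\setminus J$ and invoke Hall's marriage theorem to translate condition~(2) into the failure of a complete matching. The paper simply asserts the equivalence ``$\sdepth_S I/J=d$ iff no complete matching exists'' without further argument, whereas you spell out both directions explicitly (the partition built from the matching for one implication, and the injection $m\mapsto w_m$ for the other); your added care about generators of $I'$ lying in $J$ is unnecessary in the paper's setting, since the proof there first reduces to $J$ generated in degrees $\geq d+1$, making every degree-$d$ squarefree monomial of $I$ automatically lie in $I\setminus J$.
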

\begin{proof} If $J\not =0$ then $J$ is generated in degree $\geq d$, even we may suppose that $J$ is generated in degree $\geq d+1$ using an easy isomorphism. Let ${\mathcal M}_d(I)$ be the set of all square free monomials  of $I$ of degree $d$ and ${\mathcal B}={\mathcal M}_{d+1}\setminus J$. We consider the bipartite graph $G$ defined by $V(G)= {\mathcal M}_d(I)\cup {\mathcal B}$, an edge of $G$ can have only endpoints $f\in {\mathcal M}_d(I)$ and $b\in {\mathcal B}$ with $f|b$. Given   $f\in {\mathcal M}_d(I)$ let $\Gamma(f)$ be the set of all vertices $b$ adjacent to $f$ and for $A\subset  {\mathcal M}_d(I)$ set  $\Gamma(A)=\cup_{f\in A}  \Gamma(f)$.
By P. Hall's marriage theorem \cite{vW} there is a complete matching from ${\mathcal M}_d(I)$ to ${\mathcal B}$ if and only if $|\Gamma(A)|\geq |A|$ for every subset $A\subset {\mathcal M}_d(I)$. Thus  $\sdepth_S I/J=d$ if and only if there exists no complete matching above and so there exists a subset $A\subset {\mathcal M}_d(I)$ such that $|\Gamma(A)|< |A|$, that is $I'=(A)$ satisfies the second statement.
\end{proof}
For $J=0$ we get the following corollary, which is closed to \cite[Lemma 3.3]{Sh}
\begin{Corollary} The following statements are equivalent:
\begin{enumerate}
\item{} $\sdepth_S I=d$
 \item{} there exist some square free monomials  of degree $d$ in $I$, which generate an ideal $I'$ such that $\rho_d(I')>\rho_{d+1}(I')$.
     \end{enumerate}
\end{Corollary}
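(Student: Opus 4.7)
The plan is to deduce this Corollary as the immediate specialization of Theorem~\ref{nice} to the case $J=0$. First I would observe that for any ideal $I' \subset I$ we have $I' \cap J = I' \cap (0) = (0)$, so $\rho_{d+1}(I' \cap J) = 0$. Consequently the inequality appearing in statement~(2) of Theorem~\ref{nice}, namely
$$\rho_d(I') > \rho_{d+1}(I') - \rho_{d+1}(I' \cap J),$$
collapses to exactly the inequality $\rho_d(I') > \rho_{d+1}(I')$ stated in (2) of the corollary.

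Next I would verify that the standing hypotheses of Theorem~\ref{nice} are present: at the start of Section~4 we already assume $\rho_d(I) > 0$ and that $I$ is generated in degrees $\geq d$, which in particular yields $\sdepth_S I \geq d$. These are precisely the hypotheses used in Theorem~\ref{nice}, so no further assumptions are needed.

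The one point worth a sentence of commentary is that the proof of Theorem~\ref{nice} opens with a reduction that explicitly considers $J \neq 0$; however, inspection shows that this reduction is only used to allow the passage to degree $\geq d+1$ generators for $J$, and is trivially valid (vacuous) for $J=0$. In the matching argument itself, taking $J=0$ just makes $\mathcal B = \mathcal M_{d+1}(I)$, and the bipartite graph $G$, the Hall condition $|\Gamma(A)| \geq |A|$, and the identification $\rho_{d+1}((A)) = |\Gamma(A)|$ all go through unchanged. So there is no serious obstacle; the Corollary is a direct corollary in the literal sense.

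I would therefore conclude: by Theorem~\ref{nice} applied with $J = 0$, condition $\sdepth_S I = d$ is equivalent to the existence of a subset of square free monomials of degree $d$ in $I$ whose generated ideal $I'$ satisfies $\rho_d(I') > \rho_{d+1}(I')$, which is the claim.
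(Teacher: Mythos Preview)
Your proposal is correct and matches the paper's approach exactly: the paper gives no separate proof for this Corollary, introducing it simply with ``For $J=0$ we get the following corollary,'' which is precisely the specialization you carry out. Your additional remarks on why the $J=0$ case causes no trouble in the proof of Theorem~\ref{nice} are accurate and, if anything, more detailed than necessary.
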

\begin{Theorem}\label{main2} If $\sdepth_S I/J=d$ then $\depth_S I/J=d$, that is Stanley's conjecture holds in this case.
\end{Theorem}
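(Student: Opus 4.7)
The plan is to chain Theorem~\ref{nice}, Theorem~\ref{main} and Lemma~\ref{s+1}. As a preliminary step I would carry out the easy isomorphism used inside the proof of Theorem~\ref{nice}: let $\widetilde I$ be the ideal generated by those minimal generators of $I$ which do not lie in $J$, and replace the pair $(I,J)$ by $(\widetilde I,\widetilde I\cap J)$. The quotient is unchanged up to isomorphism, so both $\sdepth$ and $\depth$ are preserved, $\widetilde I$ is still generated in degrees $\ge d$, and a routine lcm computation shows that $\widetilde I\cap J$ is now generated in degree $\ge d+1$. The hypothesis $\sdepth_S\widetilde I/(\widetilde I\cap J)=d$ also forces $\rho_d(\widetilde I)>0$, for otherwise $\widetilde I$ would be generated in degree $\ge d+1$ and the Stanley depth of the quotient would be at least $d+1$.

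Next I would apply Theorem~\ref{nice} to the reduced pair: it produces a subideal $I'\subset\widetilde I$, generated by square-free monomials $g_1,\dots,g_t$ of degree $d$, with
\[
\rho_d(I')>\rho_{d+1}(I')-\rho_{d+1}(I'\cap J).
\]
Because $\widetilde I$ is generated in degrees $\ge d$, every degree-$d$ square-free monomial of $\widetilde I$ is itself a minimal generator, so each $g_i$ belongs to the minimal generating set of $\widetilde I$. Setting $J'':=I'\cap J$, the same degree argument shows $J''$ is generated in degree $\ge d+1$, so the pair $(I',J'')$ fits the hypotheses of Theorem~\ref{main} with $r=\rho_d(I')$ and $s=\rho_{d+1}(I')-\rho_{d+1}(I'\cap J)$; the displayed inequality is precisely $r>s$, and Theorem~\ref{main} delivers $\depth_SI'/(I'\cap J)=d$.

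Finally I would invoke Lemma~\ref{s+1} on $\widetilde I$ and $\widetilde I\cap J$ with distinguished subideal $I'=(g_1,\dots,g_t)$, after renumbering the minimal generators of $\widetilde I$ so that $g_1,\dots,g_t$ come first. This promotes $\depth_SI'/(I'\cap J)=d$ to $\depth_S\widetilde I/(\widetilde I\cap J)=d$, which equals $\depth_SI/J$ via the preliminary isomorphism; combined with the hypothesis $\sdepth_SI/J=d$ this also records Stanley's Conjecture in the present range. The main obstacle, modest as it is, is the preliminary bookkeeping: one must confirm that discarding the minimal generators of $I$ lying in $J$ really puts $\widetilde I\cap J$ into the form required by Theorems~\ref{main} and~\ref{nice} (generated in degree $\ge d+1$) and does not accidentally annihilate the degree-$d$ piece of $\widetilde I$. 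Once these checks are in place, the three cited results glue together without further work.
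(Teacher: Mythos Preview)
Your argument is correct and follows exactly the paper's route: apply Theorem~\ref{nice} to extract $I'$, apply Theorem~\ref{main} to obtain $\depth_S I'/(I'\cap J)=d$, then apply Lemma~\ref{s+1} to pass back to $I/J$. Your explicit preliminary reduction (replacing $I$ by $\widetilde I$ so that the smaller ideal is generated in degree $\ge d+1$) is precisely the ``easy isomorphism'' the paper alludes to inside the proof of Theorem~\ref{nice} but does not spell out; making it explicit is what allows Lemma~\ref{s+1} to be invoked verbatim. The one small item the paper records that you omit is the boundary case $J=0$ (equivalently $I'\cap J=0$): Theorem~\ref{main} is stated under the Section~1 standing hypothesis that both ideals are nonzero, so there the paper cites Corollary~\ref{str} instead, the inequality $\rho_d(I')>\rho_{d+1}(I')$ reading as $\mu(I')\ge\rho_{d+1}(I')$. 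This is a one-line addition.
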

\begin{proof} By  Theorem \ref{nice} there exists a monomial square free ideal $I'\subset I$  such that $\rho_d(I')>\rho_{d+1}(I')-\rho_{d+1}(I'\cap J)$. Then $\depth_S I'/(I'\cap J)=2$ by  Theorem \ref{main} (if $J=0$ we apply  Corollary \ref{str}). Now it is enough to apply Lemma  \ref{s+1}.
\end{proof}

\vskip 0.5 cm

\end{document}